\definecolor{darkblue}{rgb}{0,0,0.4} 
\providecommand\@dotsep{5}
\def\listtodoname{List of Todos}
\def\listoftodos{\@starttoc{tdo}\listtodoname}
\tikzstyle{crossing}=[circle,fill=white,minimum height=6pt,inner sep=0pt, outer sep=0pt, style={transform shape=false}]
\numberwithin{equation}{section}
\theoremstyle{plain}
\newtheorem{thm}[equation]{Theorem}
\newtheorem*{thm*}{Theorem}
\newtheorem{cor}[equation]{Corollary}
\newtheorem{lem}[equation]{Lemma}
\theoremstyle{definition}
\newtheorem{examp}[equation]{Example}
\newtheorem{quest}[equation]{Question}
\newtheorem{defn}[equation]{Definition}
\DeclareMathOperator{\gr}{grad}
\DeclareMathOperator{\im}{im}
\newcommand\Z{\mathbb{Z}}
\newcommand\Q{\mathbb{Q}}
\def\spinc {{\operatorname{spin^c}}}
\def\rk {{\operatorname{rank}}}
\def\fin\qedhere
\def\pr {{\text{pr}}}
\def\from {{\leftarrow}}
\def\s{\mathfrak s}
\def\gr{\mathrm{gr}}
\def\ff {{\mathbb{F}}}
\def\ker {{\operatorname{ker}}}
\def\fin\qedhere
\def\from {{\leftarrow}}
\newcommand{\ZZ}{\mathbb{Z}}
\def\H{\mathit{H}}
\def\CF {\mathit{CF}}
\def\HF {\mathit{HF}}
\newcommand\HFp {\HF^+}
\newcommand \CFm {\CF^-}
\newcommand \HFm {\HF^-}
\def\HFI {\mathit{HFI}}
\newcommand \HFIm {\HFI^-}
\newcommand \Hc {\H_{\mathrm{conn}}}
\def\inv{\iota}
\def\Inv{\mathfrak{I}}
\def\inv{\iota}
\def\ff {{\mathbb{F}}}
\def\im{\text{im }}
\def\ker{\text{ker }}
\def\gr{\text{gr}}
\def\width{\text{width}}
\title{Connected Heegaard Floer Homology of Sums of Seifert Fibrations}
\begin{document}
\author[Irving Dai]{Irving Dai}
\thanks{ID was partially supported by NSF grant DGE-1148900.}
\address{Department of Mathematics, Princeton University, Princeton, NJ 08540}
\email{idai@math.princeton.edu}

\begin{abstract}
We compute the connected Heegaard Floer homology (defined by Hendricks, Hom, and Lidman) for a large class of 3-manifolds, including all linear combinations of Seifert fibered homology spheres. We show that for such manifolds, the connected Floer homology completely determines the local equivalence class of the associated $\inv$-complex. Some identities relating the rank of the connected Floer homology to the Rokhlin invariant and the Neumann-Siebenmann invariant are also derived. Our computations are based on combinatorial models inspired by the work of N\'emethi on lattice homology.
\end{abstract}


\keywords{Involutive Floer homology, homology cobordism}

\subjclass{57R58, 57M27}
\maketitle

\section{Introduction and Results}\label{sec:1}
Understanding the structure of the homology cobordism group $\Theta^3_{\ZZ}$ has been a motivating problem in the development of 3- and 4-manifold topology over the last several decades \cite{FSinstanton}, \cite{FurutaHom}, \cite{Froy}. Most recently, in \cite{Triangulations} Manolescu introduced a Pin(2)-equivariant version of Seiberg-Witten Floer homology and used this to refute the triangulation conjecture by ruling out 2-torsion in $\Theta^3_{\ZZ}$ with $\mu(Y) = 1$. Further variants and applications of Manolescu's construction have been studied extensively by several authors; see e.g.\ \cite{Lin}, \cite{Lin2}, \cite{Lin3}, \cite{Stoffregen}, \cite{Stoffregen2}, \cite{Dai}.

In \cite{HMinvolutive}, Hendricks and Manolescu defined an analogue of Pin(2)-equivariant Seiberg-Witten Floer homology in the Heegaard Floer setting, called involutive Heegaard Floer homology. This associates to a rational homology sphere $Y$ (with self-conjugate $\spinc$-structure $\s$) an algebraic object called an $\inv$-complex, whose mapping cone is a well-defined 3-manifold invariant $\HFIm(Y, \s)$.\footnote{Involutive Floer homology is defined for all three-manifolds, but in this paper we will only consider the case of rational homology spheres.} In \cite{HMZ}, Hendricks, Manolescu, and Zemke constructed a group $\Inv_{\Q}$, consisting of all possible $\inv$-complexes modulo an algebraic equivalence relation called local equivalence. This notion is modeled on the relation of homology cobordism, in the sense that if two (integer) homology spheres are homology cobordant, then their $\inv$-complexes are locally equivalent. Hendricks-Manolescu-Zemke obtain a homomorphism
\[
h: \Theta^3_{\ZZ} \rightarrow \Inv_{\Q}.
\]
One can thus obstruct the existence of a homology cobordism between two homology spheres by computing their local equivalence classes in $\Inv_{\Q}$ and showing that they are not equal. 

In \cite{DM}, Manolescu and the author exhibited a $\mathbb{Z}^\infty$-subgroup of $\Theta^3_{\ZZ}$ by establishing the linear independence of a certain infinite family of $\inv$-complexes $\{X_i\}_{i=1}^\infty$.\footnote{These differ  from the basis elements presented in \cite[Theorem 1.7]{DM} by an overall grading shift. See the discussion following \cite[Lemma 4.3]{DS}.} (This follows a similar proof in the Pin(2)-case due to Stoffregen \cite{Stoffregen2}; see also \cite{FSinstanton}, \cite{FurutaHom}.) Let $\mathfrak{X}$ be the subgroup of $\Inv_{\Q}$ spanned by the $X_i$. In \cite{DS}, Stoffregen and the author showed that (up to grading shift) $h(Y) \in \mathfrak{X}$ for a large class of 3-manifolds $Y$, including any sum of Seifert fibered homology spheres. For such $Y$, understanding $h(Y)$ thus reduces to studying linear combinations of the basis elements $X_i$. The author is currently unaware of any example with $h(Y) \notin \mathfrak{X}$; the existence of such a manifold would prove that Seifert fibered spaces do not generate the homology cobordism group.\footnote{There are many algebraic examples of $\inv$-complexes which are not locally equivalent to anything in $\mathfrak{X}$, but it is unclear whether these occur as the $\inv$-complexes of actual 3-manifolds.}

Following a construction of Stoffregen \cite{Stoffregen} in the Seiberg-Witten case, in \cite{HHL} Hendricks, Hom, and Lidman used involutive Heegaard Floer homology to define a new invariant of homology cobordism, called connected Heegaard Floer homology. This is an invariant of the local equivalence class of $(Y, \s)$ which associates to $(Y, \s)$ a particular (grading-shifted) summand $\HF_{\mathrm{conn}}(Y, \s)$ of the usual Heegaard Floer homology $\HFm(Y, \s)$. While \textit{a priori} the connected Floer homology of $Y$ is a strictly weaker invariant than its local equivalence class, it is considerably easier to state and understand, as the obstruction coming from $\HF_{\mathrm{conn}}$ takes the form of a homology group in the usual sense.

The goal of this paper is to use the results of \cite{DS} to compute the connected Floer homology of any 3-manifold with local equivalence class in $\mathfrak{X}$. Our main result (Theorem~\ref{thm:1.1}) establishes the possible forms that $\HF_{\mathrm{conn}}$ can take, and shows that (for such manifolds) $\HF_{\mathrm{conn}}$ is sufficient to recover the entire local equivalence class. We then re-phrase several computations from \cite{DS} to obtain identities relating the rank of $\HF_{\mathrm{conn}}$ to other homology cobordism invariants, such as the Rokhlin invariant and the Neumann-Siebenmann invariant \cite{Neu}, \cite{Sieb}. 

Finally, a brief word on the methods used in this paper. The bulk of our approach consists of developing combinatorial models for understanding the local equivalence classes of different $\inv$-complexes. We think that these models (which we call ``geometric complexes") are interesting in their own right, as they allow various algebraic constructions which are otherwise opaque to be translated into topological operations on finite-dimensional cell complexes. (See also \cite[Section 7]{DM}, \cite[Section 2.4]{DS}.) Our construction is heavily inspired by work of N\'emethi \cite{Nem} on lattice homology \cite{Plumbed}. Although here we only use geometric complexes as a convenient combinatorial tool, we hope that this may be indicative of deeper connections to Floer theory as a whole.

\subsection{Statement of Results}\label{sec:1.1} We first establish some notation. Let $Y$ be a rational homology sphere, and let $\s$ be a self-conjugate $\spinc$-structure on $Y$. We denote by $h$ the map sending $(Y, \s)$ to the local equivalence class of its (grading-shifted) $\inv$-complex:
\[
(Y, \s) \mapsto h(Y, s) = (\CFm(Y, \s), \inv)[-2].
\]
See Section~\ref{sec:2.1} for details. In this paper, we will be concerned with 3-manifolds for which $h(Y, \s)$ decomposes (up to grading shift) as a linear combination of the basis complexes $X_i$. Since each $X_i$ has $d$-invariant equal to zero, it is easily checked that in this situation the grading shift is given by $d(Y, \s)$; i.e.,
\[
h(Y, \s) = \left( \sum_i c_iX_i \right)[-d(Y, \s)].
\]
See Section~\ref{sec:2.1} for a definition of the $X_i$ and a discussion of which 3-manifolds are known to satisfy this decomposition. For now, the reader may take $Y$ to be a sum of Seifert fibered rational homology spheres. We begin by defining an algorithm that takes an element of $\mathbb{Z}^{\infty}$ (written as a linear combination of the $X_i$) and outputs a $\ZZ$-graded $\ff[U]$-module.

Let $\mathcal{T}_d(l)$ be the $U$-torsion tower $\ff[U]/U^l$, shifted in grading so that its uppermost element has grading $d$. Note that $\mathcal{T}_d(l)$ has $l$ elements; we refer to $l$ as the \textit{length} of $\mathcal{T}_d(l)$. We will additionally identify certain towers as \textit{downwards pointing} and other towers as \textit{upwards pointing}. In the first case, we define the \textit{head of} $\mathcal{T}_d(l)$ to be the element of maximal grading $d$, and the \textit{tail of} $\mathcal{T}_d(l)$ to be the element of minimal grading $d - 2(l - 1)$. In the second, we interchange these conditions, so that the head is the element of minimal grading and the tail is the element of maximal grading. Note that in both cases, the action of $U$ still decreases grading by two; these definitions are a notational aid and are not intrinsic to $\mathcal{T}_d(l)$.

Now let $X$ be a linear combination of the form
\[
X = \pm X_{i_1} \pm X_{i_2} \pm X_{i_3} \pm \cdots \pm X_{i_n},
\]
with $i_1 \geq i_2 \geq \cdots \geq i_n$. Without loss of generality, we may assume that $X$ is \textit{maximally simplified}, meaning that there are no cancelling pairs $X_i - X_i$. We use the indices appearing in $X$ to construct a $\mathbb{Z}$-graded $\ff[U]$-module, as follows. We begin by drawing a $U$-torsion tower of length $i_1$. If the sign of $X_{i_1}$ is positive, then we identify this tower as downwards pointing, and place it so that its head has grading zero. If the sign of $X_{i_1}$ is negative, then we identify the tower as upwards pointing, and place it so that its head has grading one.

We continue drawing $U$-torsion towers inductively, where at each step $k$ we draw a tower of length $i_k$. If the sign of $X_{i_k}$ is positive, then we identify this tower as downwards pointing; otherwise, we identify the tower as upwards pointing. We place the tower according to the following criteria:
\begin{enumerate}
\item If the sign of $X_{i_k}$ is opposite the sign of $X_{i_{k -1}}$, then the head of the $i_k$-tower should have the same grading as the tail of the $i_{k-1}$-tower.
\item If the signs of $X_{i_k}$ and $X_{i_{k-1}}$ are both positive, then the head of the $i_k$-tower should have grading one less than the tail of the $i_{k-1}$-tower.
\item If the signs of $X_{i_k}$ and $X_{i_{k-1}}$ are both negative, then the head of the $i_k$-tower should have grading one greater than the tail of the $i_{k-1}$-tower.
\end{enumerate}
We denote the $\ff[U]$-module constructed in this way by $\mathcal{H}(X)$. An example of this algorithm is presented in Figure~\ref{fig:1}.

\begin{figure}[h!]
\center
\includegraphics[scale=0.8]{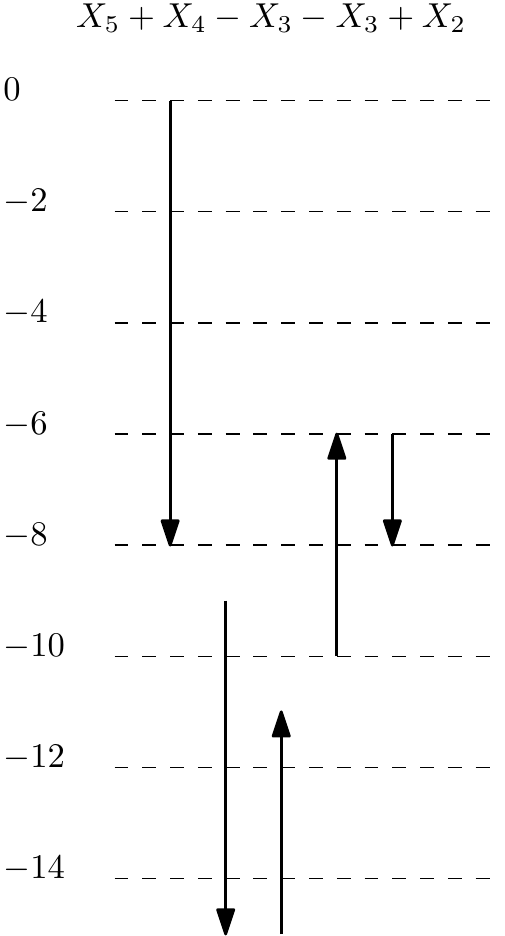}
\caption{Example of $\mathcal{H}(X)$. Here, arrows represent orientations of towers, where we think of an arrow as pointing from the head to the tail of a tower. Maslov gradings are indicated on the left of the diagram.}\label{fig:1}
\end{figure}

We are now ready to state our main theorem:
\begin{thm}\label{thm:1.1}
Let $Y$ be a rational homology sphere, and let $\s$ be a self-conjugate $\spinc$-structure on $Y$. Suppose that
\[
h(Y, \s) = (\pm X_{i_1} \pm X_{i_2} \pm X_{i_3} \pm \cdots \pm X_{i_n})[-d(Y, \s)],
\]
where we assume that the above linear combination is maximally simplified with $i_1 \geq i_2 \geq \cdots \geq i_n$. Then the connected Heegaard Floer homology of $(Y, \s)$ is given by
\[
\HF_{\mathrm{conn}}(Y, \s) = \mathcal{H}(X)[-d(Y, \s) + 1].
\]
Here, $X = \pm X_{i_1} \pm \cdots \pm X_{i_n}$ is the linear combination obtained in the decomposition of $h(Y, \s)$, and $\mathcal{H}(X)$ is the $\ff[U]$-module constructed using the algorithm defined previously.
\end{thm}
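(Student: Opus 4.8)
The plan is to reduce the theorem to a purely algebraic computation with $\inv$-complexes, carried out through the geometric complex formalism developed in the body of the paper. Since connected Heegaard Floer homology is an invariant of the local equivalence class \cite{HHL}, it suffices to compute the connected complex of any $\inv$-complex representing $h(Y,\s)$; by hypothesis this class equals $X[-d(Y,\s)]$ with $X=\pm X_{i_1}\pm\cdots\pm X_{i_n}$, so the grading shift splits off and one is left to compute the connected complex of $X$ itself. As a model I would take the connected-sum representative
\[
\C \;=\; \mathcal{C}(X_{i_1})^{\pm}\otimes\cdots\otimes\mathcal{C}(X_{i_n})^{\pm},
\]
where $\mathcal{C}(X_i)$ is the standard complex of Section~\ref{sec:2.1}, a sign of $-$ indicates that we pass to the dual $\inv$-complex, and $\otimes$ is the tensor product computing connected sums of $\inv$-complexes up to local equivalence. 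Translating $\C$ into a geometric complex and cancelling differentials as in \cite[Section 2.4]{DS}, the monotonicity hypothesis $i_1\geq\cdots\geq i_n$ produces a small, explicitly described model $\C'$ whose shape is precisely the chain of $U$-torsion towers appearing in the algorithm for $\mathcal{H}(X)$: two consecutive standard pieces of opposite sign glue head-to-tail, yielding case (1), whereas two consecutive pieces of the same sign are joined by a single connecting arrow, which produces the grading offset of case (2) or (3).

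The heart of the argument is then to show that $\C'$ is \emph{connected}: that every self-local equivalence $f\colon\C'\to\C'$ is a homotopy equivalence, so that by \cite{HHL} the complex $\C'$ is its own connected complex. I would prove this by analyzing such an $f$ directly. Since $f$ is a local map it acts as the identity on the infinite $U$-tower modulo $U$-torsion, and combining this with the grading constraints, the $\ff[U]$-module structure of $\C'$, and the requirement that $f$ commute with $\inv$ up to homotopy should force $f$ to induce an isomorphism on homology, hence to be a homotopy equivalence. The only way this can fail is for two adjacent standard pieces $X_i$ and $-X_i$ to cancel against one another; this is exactly the possibility excluded by the assumption that $X$ is maximally simplified, and it is here that that hypothesis is used.

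It then remains to read off the answer. Once $\C'$ is known to be connected, $\HF_{\mathrm{conn}}$ is the reduced homology of $\C'$ --- its ordinary homology modulo the infinite tower --- which, as an $\ff[U]$-module, is $\bigoplus_k \ff[U]/U^{i_k}$ with the relative gradings of the summands dictated by the head-to-tail placement rules (1)--(3); by construction this module is $\mathcal{H}(X)$. Finally one tracks the overall grading shift: the factor $[-d(Y,\s)]$ comes from the decomposition of $h(Y,\s)$, and the remaining $[+1]$ from matching the grading normalization in the definition of $\HF_{\mathrm{conn}}$ in \cite{HHL} against the convention, built into the construction of $\mathcal{H}(X)$, that the topmost tower be placed with its head at grading $0$ or $1$. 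This gives $\HF_{\mathrm{conn}}(Y,\s)=\mathcal{H}(X)[-d(Y,\s)+1]$.

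I expect the main obstacle to be the second step. Identifying the connected complex --- as opposed to merely exhibiting some representative of the local equivalence class --- requires controlling \emph{all} self-local equivalences of the model $\C'$, and it is precisely this that motivates the development of the geometric complex machinery and the careful use of the maximal-simplification hypothesis.
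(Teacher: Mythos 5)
Your overall framework is right and matches the paper's: build a small geometric representative of $X=\pm X_{i_1}\pm\cdots\pm X_{i_n}$ by iteratively tensoring with $X_{i_k}$ (doubling) or its dual (halving), compute its torsion homology, and argue that this representative \emph{is} its own connected complex. The representative you describe is the same complex $\Sigma$ that the paper constructs in Lemma~\ref{lem:5.2}, and Lemma~\ref{lem:5.1} then confirms its torsion homology is exactly $\mathcal{H}(X)$. Where your proposal genuinely diverges --- and where the gap lies --- is in how you propose to establish that $\Sigma$ is connected.

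You plan to show directly that every self-local equivalence of $\Sigma$ ``is a homotopy equivalence,'' citing grading constraints, the module structure, and $\iota$-equivariance up to homotopy. Two problems. First, the statement you actually need is that any (maximal) self-local equivalence $f$ of $\Sigma$ has $\ker f = 0$, so that $\im f = \Sigma$ is the connected complex; being a homotopy equivalence is not by itself enough, since a homotopy equivalence can still have nontrivial kernel and so $\im f$ could be a proper, smaller complex. Second, and more seriously, you do not supply the argument --- you say the constraints ``should force $f$ to induce an isomorphism on homology,'' and you locate the role of maximal simplification in the heuristic that ``the only way this can fail is for $X_i$ and $-X_i$ to cancel.'' This is where the real work is, and directly controlling all self-local equivalences of $\Sigma$ (especially the homotopies appearing in $f\circ\iota\simeq\iota\circ f$) is delicate. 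The paper sidesteps it entirely with an indirect counting argument: it inducts on $n$, writes $X = A\otimes B$ with $A = c_1X_{i_1}+\cdots+c_{n-1}X_{i_{n-1}}$ and $B = c_nX_{i_n}$, and uses the two local equivalences $A = X\otimes(-B)$ and $B = (-A)\otimes X$ together with the K\"unneth theorem and an analysis of $\Tor$ to show that $\Hc(X)$ must contain at least $|c_k|$ towers of length $i_k$ for every $k$; since $H_*(\Sigma)$ has exactly that many, $\Hc(X)$ equals all of the torsion homology of $\Sigma$. Maximal simplification enters precisely because the $\Tor$ argument needs $i_n$ to be distinct from the other $i_k$'s --- not because of a visible cancellation of adjacent $X_i$ and $-X_i$ pieces. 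Your proposal correctly predicts the answer and the representative, but it leaves the central step as an unproved assertion, and the route you sketch for it is substantially harder than the argument the paper actually uses.
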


We will prove Theorem~\ref{thm:1.1} by establishing an explicit model for the connected complex (see Definition~\ref{defn:2.7}) of $(Y, \s)$. Conversely, suppose that we have some $h(Y, \s) \in \mathfrak{X}$ for which $\HF_{\mathrm{conn}}(Y, \s)$ is known. Then it is not hard to check that one can recover the decomposition of $h(Y, \s)$ into the $X_i$. Hence:

\begin{thm} \label{thm:1.2}
Suppose that (up to grading shift) $h(Y, \s) \in \mathfrak{X}$. Then the local equivalence class of $h(Y, \s)$ in $\Inv_{\Q}$ is completely determined by the isomorphism class of $\HF_{\mathrm{conn}}(Y, \s)$, together with $d(Y, \s)$. 
\end{thm}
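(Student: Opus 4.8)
The plan is to argue that the map $X \mapsto \mathcal{H}(X)$ — viewed as a function from maximally-simplified linear combinations of the $X_i$ to isomorphism classes of $\mathbb{Z}$-graded $\ff[U]$-modules — is injective, so that knowing $\HF_{\mathrm{conn}}(Y,\s)$ together with the grading shift $d(Y,\s)$ pins down the multiset of signed indices $\pm X_{i_1}, \dots, \pm X_{i_n}$, hence $h(Y,\s) \in \mathfrak{X}$ itself. By Theorem~\ref{thm:1.1}, $\HF_{\mathrm{conn}}(Y,\s) = \mathcal{H}(X)[-d(Y,\s)+1]$, so the isomorphism class of $\HF_{\mathrm{conn}}(Y,\s)$ determines $\mathcal{H}(X)$ up to overall grading shift, and the knowledge of $d(Y,\s)$ then fixes the absolute grading; thus it suffices to show $\mathcal{H}(X)$ (with its absolute grading) determines $X$, and in fact that already the graded isomorphism type of $\mathcal{H}(X)$ rigidifies the construction.

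The key steps, in order. First, I would record the structural features of $\mathcal{H}(X)$ that are manifest from the algorithm: as an $\ff[U]$-module it is a direct sum of $n$ torsion towers $\mathcal{T}_{d_k}(i_k)$, so the multiset of lengths $\{i_1 \geq \cdots \geq i_n\}$ is read off immediately from the module structure (ignoring grading), and the number of towers $n$ is recovered. Second, I would show how to recover the signs and the precise order. Here the idea is to run the construction "in reverse," peeling off towers one at a time from the top of the diagram. The topmost tower is the one containing the generator of maximal (or, in the all-negative-prefix situation, the relevant extremal) Maslov grading; its length is some $i_k$ in the multiset, and the sign of $X_{i_1}$ is detected by whether the head of the top tower sits in grading $0$ or grading $1$ (using that $d(Y,\s)$ has already normalized the grading). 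Once the first tower and its sign are known, the placement rules (1)--(3) are invertible: the grading of the head of the next tower, relative to the tail of the current one, uniquely determines whether the next sign agrees with or is opposite to the current one, and hence determines the next sign; and among towers of a given length the gradings force a unique matching. Iterating, one reconstructs the full sequence $\pm X_{i_1}, \dots, \pm X_{i_n}$.

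Two points require care. The first is that the reconstruction must be shown to be \emph{well-defined on isomorphism classes}: a priori the graded module $\mathcal{H}(X)$ could conceivably also arise from a different $X'$ via a different ordering of towers that happens to produce the same submodule-of-each-grading data, so I would verify that the "peel from the top" procedure is canonical — i.e., that the extremal-grading tower is unique, and that at each stage the next tower is forced — using the maximally-simplified hypothesis to rule out degenerate coincidences (in particular, adjacent cancelling pairs $X_i - X_i$ are exactly what the algorithm forbids, and these are the configurations that would otherwise collapse). The second, which I expect to be \textbf{the main obstacle}, is handling the interaction of consecutive towers of the same sign, where rules (2) and (3) introduce a one-step grading offset rather than gluing head-to-tail: I must confirm that this offset is always \emph{visible} in the module — that two same-sign towers never overlap in a way that merges into a configuration indistinguishable from a single longer tower or from an opposite-sign pair — and that the parity and direction of the offset robustly encode the sign. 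This amounts to a short case analysis on the four sign-transition types, checking in each case that the relative gradings of consecutive heads and tails are distinct values, so that the inverse map is single-valued. Once that case analysis is complete, injectivity of $X \mapsto \mathcal{H}(X)$ follows, and Theorem~\ref{thm:1.2} is immediate from Theorem~\ref{thm:1.1}.
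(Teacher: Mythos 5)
Your overall plan — invert the algorithm of Theorem~\ref{thm:1.1} by reading off the graded $\ff[U]$-module structure of $\HF_{\mathrm{conn}}$ — is the same as the paper's, and the first step (recover the multiset of lengths from the module structure, ignoring gradings) is exactly right. But the ``peel from the top of the diagram'' step has a real gap. When the initial sign is negative, the tower coming from $X_{i_1}$ sits at the \emph{bottom} of the grading, not the top, and the tower of maximal Maslov grading is the \emph{last} tower in the linear combination. Concretely, for $X = -X_2 - X_1$ the algorithm places the $-X_2$ tower at gradings $1,3$ and the $-X_1$ tower at grading $4$; the ``topmost tower'' by grading is the length-one tower, its head is at grading $4$, and so your criterion ``the sign of $X_{i_1}$ is detected by whether the head of the top tower sits in grading $0$ or grading $1$'' fails. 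Your parenthetical ``(or, in the all-negative-prefix situation, the relevant extremal)'' gestures at this, but it is circular as stated: you need to know the sign of $X_{i_1}$ to decide which extreme of the grading to look at.

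The paper sidesteps this by ordering the reconstruction \emph{by tower length}, not by grading. It first groups all towers of a fixed length $l$ into a ``chain'' — this grouping is well-defined because maximal simplification means $X_l$ and $-X_l$ never both appear, so all length-$l$ towers share an orientation — and then processes the chains in descending order of $l$. The chain of maximal length corresponds to $X_{i_1}$, and one checks whether its head sits in grading $d(Y,\s)-1$ (downwards) or $d(Y,\s)$ (upwards); the orientation of each subsequent chain is then forced by its grading offset relative to the tail of the previous chain. This is essentially your ``rules (1)--(3) are invertible'' observation, but keyed to length rather than Maslov grading; it eliminates the ambiguity above and also collapses your feared four-way sign-transition case analysis to the much smaller question of how one chain sits relative to the next. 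With ``sort by length'' substituted for ``peel from the top,'' your argument goes through and matches the paper's. Your worry that two same-sign towers might ``merge'' into a single tower is unfounded for a simpler reason than you anticipate: consecutive same-sign towers are offset by one in grading, while elements within a single $U$-tower differ in grading by multiples of two, so they lie in opposite parities and can never be confused.
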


If $Y_1$ and $Y_2$ are two manifolds as above whose connected Floer homologies and $d$-invariants are known, then Theorems~\ref{thm:1.1} and \ref{thm:1.2} allow us to immediately compute the connected Floer homology of $Y_1 \# Y_2$. To do this, we first apply Theorem~\ref{thm:1.2} to write $h(Y_1)$ and $h(Y_2)$ as a linear combination of the $X_i$; then we sum these together, remove all cancelling pairs, and apply Theorem~\ref{thm:1.1} to the result.

We also derive some identities involving the rank of $\HF_{\mathrm{conn}}(Y)$. For these applications, we specialize to the class of almost-rational (AR) plumbed 3-manifolds. This family (which includes all Seifert fibered rational homology spheres) is defined by placing certain combinatorial restrictions on the usual three-dimensional plumbing construction. See \cite{Nem} (also \cite[Definition 2.7]{DM}) for details.  

\begin{cor} \label{cor:1.3}
Let $Y$ be a linear combination of \text{AR} plumbed 3-manifolds, and let $\s$ be a self-conjugate $\spinc$-structure on $Y$. Then 
\[
\bar{\mu}(Y, \s) + \dfrac{d(Y, \s)}{2} = \mathrm{signed}\ \rk \ \HF_{\mathrm{conn}}(Y, \s).
\]
Here, $\bar{\mu}(Y, \s)$ is the Neumann-Siebenmann invariant of $(Y, \s)$. The signed rank of $\HF_{\mathrm{conn}}$ is computed by assigning each element in a downwards pointing tower a positive multiplicity, and each element in an upwards pointing tower a negative multiplicity.
\end{cor}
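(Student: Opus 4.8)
The plan is to deduce Corollary~\ref{cor:1.3} from Theorem~\ref{thm:1.1} together with the graded-root computations of \cite{DS}. By Theorem~\ref{thm:1.1} we have $\HF_{\mathrm{conn}}(Y,\s)\cong\mathcal{H}(X)[-d(Y,\s)+1]$, and a grading shift affects neither the rank nor the labeling of the towers as upwards/downwards pointing. Since $\mathcal{H}(X)$ is, by construction, the direct sum of $U$-torsion towers of lengths $i_1,\dots,i_n$, with the $k$-th tower downwards pointing precisely when the sign $\eps_k$ of $X_{i_k}$ is positive, we obtain
\[
\mathrm{signed}\ \rk\ \HF_{\mathrm{conn}}(Y,\s)=\sum_{k=1}^n\eps_k i_k=\sum_i c_i\cdot i ,
\]
where $h(Y,\s)=\bigl(\sum_i c_iX_i\bigr)[-d(Y,\s)]$. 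Hence Corollary~\ref{cor:1.3} is equivalent to the purely numerical identity $\bar{\mu}(Y,\s)+d(Y,\s)/2=\sum_i c_i\cdot i$.

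Next I would reduce to the case of a single AR plumbed manifold by additivity under connected sum. The $d$-invariant is additive; $\bar{\mu}$ is additive under connected sum (\cite{Neu}, \cite{Sieb}; see also \cite{Nem}); and $\sum_i c_i\cdot i$ is additive because $h$ is a homomorphism, $d$ is additive, and the assignment $\xi\mapsto\sum_i c_i(\xi)\cdot i$ descends to a well-defined homomorphism $\mathfrak{X}\to\ZZ$ (it sends $X_i\mapsto i$ and is unchanged by inserting cancelling pairs $X_i-X_i$). Since $Y$ is by hypothesis a linear combination of AR plumbed three-manifolds and the relevant self-conjugate $\spinc$-structures split accordingly, it suffices to prove $\bar{\mu}(Y,\s)+d(Y,\s)/2=\sum_i c_i\cdot i$ for a single AR plumbed $(Y,\s)$.

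For such $(Y,\s)$ I would work in the graded-root framework of \cite{DS} and the present paper. There the local equivalence class $h(Y,\s)$, and hence the signed data $\{(\eps_k,i_k)\}$, is read directly off N\'emethi's symmetric graded root (equivalently, from the associated $\tau$-function), as is the $d$-invariant. The remaining ingredient is a combinatorial expression for $\bar{\mu}(Y,\s)$ in the same language, obtained either from N\'emethi's identification of $\bar{\mu}$ with a quantity extracted from the graded root \cite{Nem}, or by re-running the Wu-class computation of \cite{DS} in these terms. Granting such an expression, the corollary reduces to verifying that $\bar{\mu}(Y,\s)+d(Y,\s)/2$ equals the alternating tower sum $\sum_k\eps_k i_k$. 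I expect this last step to be the main obstacle: it amounts to comparing two combinatorial formulas, but doing so carefully requires reconciling the sign and orientation conventions for $\bar{\mu}$, for the graded root, and for the upwards/downwards labeling of towers. All of the essential content already appears in \cite{DS}; the work here is to line up the two expressions and confirm that they agree.
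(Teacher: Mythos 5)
Your first reduction is exactly right and matches the paper: after Theorem~\ref{thm:1.1}, the corollary is equivalent to the numerical identity $\bar{\mu}(Y,\s)+d(Y,\s)/2=\sum_k\eps_k i_k$. But from there your plan diverges from the paper and, more importantly, does not actually close. You propose to reduce to a single AR plumbed manifold by additivity and then to ``re-run the Wu-class computation of \cite{DS}'' or invoke an identification of $\bar\mu$ with a graded-root quantity, and you explicitly flag this last step as the main obstacle that you have not carried out. That is precisely where the content of the corollary lives, so the proposal as written is incomplete.

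The paper closes the gap with a much simpler observation that makes the additivity reduction and the graded-root re-derivation unnecessary. The theorem of \cite{DS} that feeds Theorem~\ref{thm:2.5} is actually \emph{stated} with the grading shift expressed in terms of $\bar\mu$ and with a different basis: $h(Y,\s)=(\pm Y_{i_1}\pm\cdots\pm Y_{i_n})[2\bar{\mu}(Y,\s)]$, where $Y_i=X_i[-2i]$ (this is the grading-shift discrepancy mentioned in the footnote to Theorem~\ref{thm:2.5} and after \cite[Lemma 4.3]{DS}). Comparing this with the decomposition $h(Y,\s)=(\pm X_{i_1}\pm\cdots\pm X_{i_n})[-d(Y,\s)]$ and matching overall grading shifts gives $2\bar{\mu}(Y,\s)-2\sum_k\eps_k i_k=-d(Y,\s)$ immediately, which is your identity. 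In short, the missing ingredient is not a new graded-root computation but simply reading off the two versions of the same decomposition theorem; once you notice that, the corollary follows in one line from Theorem~\ref{thm:1.1}.
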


Note that the orientations of the towers in $\HF_{\mathrm{conn}}(Y, \s)$ are not intrinsically defined, but must instead be deduced from the overall structure of $\HF_{\mathrm{conn}}(Y, \s)$. (See the proof of Theorem~\ref{thm:1.2}.) This can only be done when $\HF_{\mathrm{conn}}(Y, \s)$ takes the form prescribed by Theorem~\ref{thm:1.1}, which is consistent with the fact that $\bar{\mu}$ is not defined for all 3-manifolds.
\begin{cor} \label{cor:1.4}
Let $Y$ be a linear combination of AR plumbed homology spheres. Then 
\[
\mu(Y) + \dfrac{d(Y)}{2} = \rk \ \HF_{\mathrm{conn}}(Y) \ \ (\mathrm{mod}\ 2),
\]
where $\mu(Y)$ is the Rokhlin invariant of $Y$. 
\end{cor}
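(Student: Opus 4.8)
The plan is to deduce Corollary~\ref{cor:1.4} directly from Corollary~\ref{cor:1.3} by reducing modulo $2$. Since a homology sphere carries a unique self-conjugate $\spinc$-structure, we suppress $\s$ from the notation. For $Y$ a linear combination of AR plumbed homology spheres, Corollary~\ref{cor:1.3} gives
\[
\bar{\mu}(Y) + \frac{d(Y)}{2} = \mathrm{signed}\ \rk \ \HF_{\mathrm{conn}}(Y).
\]
The signed rank is computed by assigning multiplicity $\pm 1$ to each generator, according to whether it lies in a downwards- or an upwards-pointing tower of $\HF_{\mathrm{conn}}(Y)$ (these orientations being well-defined by Theorem~\ref{thm:1.1} and the proof of Theorem~\ref{thm:1.2}); modulo $2$ every such multiplicity equals $1$, so the signed rank and the ordinary rank of $\HF_{\mathrm{conn}}(Y)$ agree modulo $2$. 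Hence
\[
\bar{\mu}(Y) + \frac{d(Y)}{2} \equiv \rk \ \HF_{\mathrm{conn}}(Y) \pmod 2.
\]

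It therefore remains to show that $\bar{\mu}(Y) \equiv \mu(Y) \pmod 2$ for any such $Y$. This is classical: by construction the Neumann-Siebenmann invariant of a plumbed homology sphere is an integer lift of the Rokhlin invariant (\cite{Neu}, \cite{Sieb}), i.e.\ $\bar{\mu}(Y') \equiv \mu(Y') \pmod 2$ for each AR plumbed homology sphere $Y'$; the point is that the defining negative-definite plumbing can be modified, in a neighborhood of the Wu class, into a spin filling $W$ with $\sigma(W) = \sigma(P) - w \cdot w \equiv \bar{\mu}(Y')\cdot 8 \pmod{16}$, so $\mu(Y') = \sigma(W)/8 \equiv \bar{\mu}(Y') \pmod 2$. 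Both invariants are additive under connected sum (for $\bar{\mu}$ this follows from taking disjoint unions of plumbing graphs, under which the signature and the square of the Wu class both add; for $\mu$ this is the standard additivity of the Rokhlin invariant) and both are insensitive mod $2$ to orientation reversal, since $\bar{\mu}(-Y') = -\bar{\mu}(Y') \equiv \bar{\mu}(Y') \pmod 2$ and $\mu(-Y') = \mu(Y')$. Writing $Y = \sum_j a_j Y_j$ with each $Y_j$ an AR plumbed homology sphere, we obtain $\bar{\mu}(Y) = \sum_j a_j \bar{\mu}(Y_j) \equiv \sum_j a_j \mu(Y_j) = \mu(Y) \pmod 2$.

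Substituting this congruence into the displayed equation yields $\mu(Y) + \frac{d(Y)}{2} \equiv \rk \ \HF_{\mathrm{conn}}(Y) \pmod 2$, as claimed. Since the substantive input is already packaged into Corollary~\ref{cor:1.3}, there is no serious obstacle in this argument; the only point requiring care is the comparison between $\bar{\mu}$ and $\mu$ together with their additivity, but this is well documented in the literature on plumbed manifolds (and is used implicitly in \cite{DS}).
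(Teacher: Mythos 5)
Your proof is correct and takes the same route as the paper: the paper's entire proof is the one-line observation that the Rokhlin invariant reduces to the Neumann--Siebenmann invariant modulo $2$, which is precisely the key fact you invoke, the remaining reductions (signed rank $\equiv$ rank mod $2$, additivity under connected sum, orientation reversal) being left implicit there.
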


Finally, we end with the following question:

\begin{quest}\label{q:1.5}
Do there exist (integer) homology spheres $Y$ for which $\HF_{\mathrm{conn}}(Y)$ is \textit{not} isomorphic to an $\ff[U]$-module of the form described in Theorem~\ref{thm:1.1}?
\end{quest}

An affirmative answer would produce an element of $\Theta^3_{\ZZ}$ not homology cobordant to any sum of Seifert fibered spaces. There are certainly algebraic examples of $\inv$-complexes whose connected homology does not take the form of Theorem~\ref{thm:1.1} (see for instance Example~\ref{ex:6.2}), but it is unclear whether these occur as the $\inv$-complexes of actual 3-manifolds.

\medskip
\noindent {\bf Organization of the paper.} In Section~\ref{sec:2}, we review the algebraic formalism underlying involutive Heegaard Floer homology and connected Floer homology. In Section~\ref{sec:3}, we discuss geometric complexes and outline several related constructions. Sections~\ref{sec:4} and \ref{sec:5} are devoted to proving Theorem~\ref{thm:1.1}. Finally, in Section~\ref{sec:6}, we prove Theorem~\ref{thm:1.2}, as well as Corollaries~\ref{cor:1.3} and \ref{cor:1.4}.

\medskip
\noindent {\bf Acknowledgements.} The author would like to thank his advisor, Zolt\'an Szab\'o, for his continued support and guidance. He would also like to thank Kristen Hendricks, Jen Hom, Tye Lidman, Ciprian Manolescu, and Matt Stoffregen for helpful conversations. In addition, the author would like to thank the referee for several helpful comments.


\section{Background}\label{sec:2}

\subsection{Involutive Floer Homology}\label{sec:2.1} In this section, we review the involutive Heegaard Floer package defined by Hendricks and Manolescu \cite{HMinvolutive}. For the present application, we will only need to understand the algebraic formalism of the output; we refer the interested reader to \cite{HMinvolutive} instead for the topological details of the construction. 

\begin{defn}\cite[Definition 8.1]{HMZ}
An {\em $\inv$-complex} is a pair $(C, \inv)$, consisting of
\begin{itemize}
\item a $\Q$-graded, finitely generated, free chain complex $C$ over the ring $\ff[U]$, where $\operatorname{deg}(U)=-2$. Moreover, we ask that there is some $\tau \in \Q$ such that the complex $C$ is supported in degrees differing from $\tau$ by integers. We also require that there is a relatively graded isomorphism
\[
U^{-1}H_*(C) \cong \ff[U, U^{-1}],
\]
and that $U^{-1}H_*(C)$ is supported in degrees differing from $\tau$ by even integers;
\item a grading-preserving chain homomorphism $\inv: C \to C$, such that $\inv^2$ is chain homotopic to the identity.
\end{itemize}
Two $\inv$-complexes are said to be \textit{homotopy equivalent} if there are (grading-preserving) homotopy equivalences between them which are $\inv$-equivariant up to homotopy. 
\end{defn}

Given a rational homology sphere $Y$ with self-conjugate $\spinc$-structure $\s$, Hendricks and Manolescu define a homotopy involution $\inv$ on the Heegaard Floer complex $\CFm(Y, \s)$ by using the involution on the Heegaard diagram interchanging the $\alpha$- and $\beta$-curves. They then show that the map sending
\[
(Y, \s) \mapsto (\CFm(Y, \s), \inv)
\] 
is well-defined up to homotopy equivalence of $\inv$-complexes. The involutive Heegaard Floer homology of $(Y, \s)$ is constructed by taking the mapping cone of $\CFm(Y, \s)$ with respect to the map $1 + \inv$. In this paper, we will always work with the $\inv$-complexes themselves, rather than their involutive homology.

\begin{defn}\label{defn:2.2}
For any positive integer $i$, let $X_i$ be the chain complex generated over $\ff[U]$ by $\alpha$, $\iota \alpha$, and $\beta$, with
\[
\partial \beta = U^i(\alpha + \inv \alpha)
\]
and $\partial(\alpha) = \partial(\inv\alpha) = 0$. The Maslov gradings are given by $M(\alpha) = M(\inv \alpha) = 0$ and $M(\beta) = -2i + 1$. The action of $\inv$ interchanges $\alpha$ and $\inv \alpha$ and sends $\beta$ to itself. Note that here $\inv$ is an actual involution (rather than a homotopy involution); this will be true for all complexes considered in this paper.
\end{defn}

In order to study homology cobordism, we will need the following weaker notion of equivalence between $\inv$-complexes:

\begin{defn}
Two $\inv$-complexes $(C, \inv)$ and $(C', \inv')$ are called {\em locally equivalent} if there exist (grading-preserving) chain maps
\[
f : C \to C', \ \ g : C' \to C
\]
between them such that 
\[
f \circ \inv \simeq \inv' \circ f,  \ \ \ g \circ \inv' \simeq \inv \circ g,
\]
and $f$ and $g$ induce isomorphisms on homology after inverting the action of $U$. Note that this is strictly weaker than the notion of homotopy equivalence between $\inv$-complexes.
\end{defn}

We call a map $f$ as above a \textit{local map} from $(C, \inv)$ to $(C', \inv')$, and similarly we refer to $g$ as a local map in the other direction. In \cite{HMZ}, it is shown that if $Y_1$ and $Y_2$ are homology cobordant, then their respective $\inv$-complexes are locally equivalent.

In \cite[Section 8]{HMZ}, it is shown that the set of $\inv$-complexes up to local equivalence forms a group, with the group operation being given by tensor product. We call this group the \textit{involutive Floer group} and denote it by $\Inv_{\Q}$:

\begin{defn}\cite[Proposition 8.8]{HMZ}
Let $\Inv_{\Q}$ be the set of $\inv$-complexes up to local equivalence. This has a multiplication given by tensor product, which sends (the local equivalence classes of) two $\inv$-complexes $(C_1, \inv_1)$ and $(C_2, \inv_2)$ to (the local equivalence class of) their tensor product complex $(C_1 \otimes C_2, \inv_1 \otimes \inv_2)$. 
\end{defn}

The identity element of $\Inv_{\Q}$ is given by the trivial complex $\mathcal{I}_0$ consisting of a single $\ff[U]$-tower starting in grading zero, together with the identity map on this complex. Inverses in $\Inv_{\Q}$ are given by dualizing. There is an obvious subgroup of $\Inv_{\Q}$ generated by the set of $\inv$-complexes which are $\Z$-graded; we denote this by $\Inv$. Clearly, $\Inv_{\Q}$ consists of an infinite number of copies of $\Inv$, one for each $[\tau] \in \Q/2\Z$.

Let $h$ be the map sending a pair $(Y, \s)$ to the local equivalence class of its (grading-shifted) $\inv$-complex:
\[
h(Y, \s) = (\CFm(Y, \s), \iota)[-2].
\]
The main result of \cite{DS} is then:\footnote{This is not exactly the statement of \cite[Theorem 1.2]{DS}; the basis elements presented in Theorem~\ref{thm:2.5} differ from the ones in \cite{DM} and \cite{DS} by an overall grading shift. }
\begin{thm}\cite[Theorem 1.2]{DS}\label{thm:2.5}
Let $Y$ be a linear combination of almost-rational plumbed 3-manifolds, and let $\s$ be a self-conjugate $\spinc$-structure on $Y$. Then $h(Y, \s)$ decomposes uniquely as
\[
h(Y, \s) = \left( \sum_i c_iX_i \right)[-d(Y, \s)].
\]
\end{thm}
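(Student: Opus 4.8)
The plan is to prove the statement in two stages — first for a single almost-rational (AR) plumbed manifold, then for connected sums — using that the $\spinc$-decorated refinement of $h$ is a homomorphism, so that $h(Y_1 \# Y_2, \s_1 \# \s_2) = h(Y_1, \s_1) \otimes h(Y_2, \s_2)$ in $\Inv_\Q$, and that orientation reversal corresponds to dualization. The first stage identifies the building blocks; the second requires a closure statement, namely that tensor products of these building blocks remain, after simplification, in the span of the $X_i$. Uniqueness is a separate linear-independence input.

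\emph{The single-manifold case.} For a negative-definite plumbing tree with at most one bad vertex, the Heegaard Floer homology of $(Y, \s)$ is computed by the lattice-homology / graded-root machinery of Ozsv\'ath--Szab\'o and N\'emethi: one extracts from the plumbing a ``$\tau$-function'' whose associated graded root $R_\tau$ models $\CFm(Y, \s)$. The first task is to make this involutive. I would show that the conjugation symmetry $\iota$ corresponds, in this model, to the reflection symmetry of $R_\tau$, so that $(\CFm(Y, \s), \iota)$ is homotopy equivalent to an explicit ``geometric'' $\iota$-complex assembled from $R_\tau$ and its reflection. I would then run a simplification procedure on symmetric graded roots — repeatedly cancelling a reflection-symmetric pair of leaves and recording the unpaired branches, each branch of depth $\ell$ contributing a $\pm X_\ell$ — and prove that it terminates in a ``staircase'' $\iota$-complex, i.e.\ one locally equivalent to $\pm X_{i_1} \pm \cdots \pm X_{i_n}$. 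Because each $X_i$ has vanishing $d$-invariant, matching $d$-invariants (together with the built-in shift $[-2]$ in the definition of $h$) pins down the overall grading shift as $[-d(Y, \s)]$.

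\emph{Connected sums and uniqueness.} Given that each summand is a combination of $X_i$'s, I would establish a tensor-product calculus for these complexes: compute $X_i \otimes X_j$, $X_i^\ast \otimes X_j$, and so on up to local equivalence, and show that the class of staircase complexes is closed under $\otimes$; deleting cancelling pairs $X_i - X_i$ then puts the result in the ``maximally simplified'' normal form $\sum c_i X_i$. For uniqueness it suffices that $\{[X_i]\}$ is linearly independent in $\Inv_\Q$, which I would prove following \cite{DM}: a putative relation $\sum c_i X_i \sim 0$ would demand local maps in both directions whose existence is obstructed by an explicit grading and rank count on the generators of the $X_i$.

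\emph{Main obstacle.} The crux is the single-manifold case — showing that the graded root of an AR plumbed manifold is constrained enough that the symmetric-simplification algorithm always lands in a staircase, so that no ``exotic'' $\iota$-complex can occur. This is ultimately a structural fact about $\tau$-functions of AR plumbings, and is where the bulk of the work lies. The secondary difficulty is the tensor-product closure in the connected-sum step, which is a combinatorial identity for products of staircases; here the point is that up to local equivalence one need only track a small amount of data — essentially the sequence of heights of the staircase — rather than the full tensor product complex.
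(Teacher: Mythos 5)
This theorem is imported verbatim from~\cite{DS}; the present paper does not prove it, and simply uses it as input. That said, your outline tracks the actual argument in~\cite{DS} quite closely in the single-manifold case: one works with the graded-root model for AR plumbings (the relevant hypothesis is N\'emethi's almost-rational condition rather than the Ozsv\'ath--Szab\'o ``at most one bad vertex'' condition, though the two frameworks overlap), uses the identification from~\cite{DM} of conjugation with the reflection symmetry of the graded root, and then reduces a symmetric graded root to a so-called monotone root. Monotone roots correspond exactly to the strictly alternating sums $X_{i_1} - X_{i_2} + X_{i_3} - \cdots$ with $i_1 > i_2 > \cdots$, which is more rigid than an arbitrary ``staircase'' -- a single AR manifold never contributes a same-sign pair $X_i + X_j$. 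Uniqueness is the linear independence of the $X_i$ from~\cite{DM}, as you say.

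Where your plan spends unnecessary effort is the connected-sum step. You do not need a tensor-product calculus for $X_i \otimes X_j$, nor a closure lemma for staircases, to obtain the theorem as stated: $\mathfrak{X}=\langle X_i\rangle$ is a subgroup of $\Inv_\Q$ by construction, $h$ is a group homomorphism, orientation reversal is the group inverse (dualization), and $d$-invariants add under connected sum. So once each individual summand's class lies in a grading-shifted copy of $\mathfrak{X}$, a linear combination of such manifolds automatically does as well, with overall shift $-d(Y,\s)$. The explicit tensor-product simplification you sketch is the content of a genuinely harder and different problem -- finding small local representatives of elements of $\mathfrak{X}$ so as to read off the connected homology -- and that is precisely what Sections~\ref{sec:4}--\ref{sec:5} of the present paper do via the doubling construction. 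You have, however, correctly identified the single-manifold reduction to a monotone root as the crux of the cited theorem.
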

\noindent
Here, we define the family of almost-rational (AR) plumbed 3-manifolds by placing certain combinatorial restrictions on the usual three-dimensional plumbing construction. See \cite{Nem} (also \cite[Definition 2.7]{DM}) for a precise discussion. Note that the class of AR plumbed 3-manifolds includes all Seifert fibered rational homology spheres.

There are other classes of manifolds for which the conclusion of Theorem~\ref{thm:2.5} holds, most notably linear combinations of manifolds whose local equivalence classes are already given by the $X_i$. This is true (in particular) for negative surgeries on $L$-space knots; see the proof of \cite[Theorem 1.8]{HHL}.

\subsection{Connected Floer Homology}\label{sec:2.2} We now turn to the definition of connected Floer homology \cite{HHL}. We begin with:
\begin{defn}\cite[Definition 3.1]{HHL}
Let $(C, \inv)$ be an $\inv$-complex. A \textit{self-local equivalence} is a local map from $(C, \inv)$ to itself; that is, a (grading-preserving) chain map
\[
f: C \to C,
\]
such that
\[
f \circ \inv \simeq \inv \circ f,
\]
and $f$ induces an isomorphism on homology after inverting the action of $U$.
\end{defn}

Hendricks, Hom, and Lidman define a preorder on the set of self-local equivalences of $(C, \inv)$ by declaring $f \lesssim g$ whenever $\ker f \subseteq \ker g$. A self-local equivalence $f$ is then said to be \textit{maximal} if $g \gtrsim f$ implies $g \lesssim f$ for any self-local equivalence $g$.

Maximal self-local equivalences should heuristically be thought of as producing a local equivalence between $C$ and some very small subcomplex of $C$ given by $\im f$. (This subcomplex is small because $\ker f$ is large.) Note, however, that since $\inv$ is a homotopy involution and $f$ commutes with $\inv$ only up to homotopy, the action of $\inv$ need not preserve $\im f$, and similarly for the relevant homotopy maps. However, according to \cite[Lemma 3.7]{HHL}, we can modify $\inv$ to produce an actual homotopy involution $\inv_f$ on $\im f$ such that $(C, \inv)$ and $(\im f, \inv_f)$ are locally equivalent. 

Hendricks-Hom-Lidman further show that a maximal self-local equivalence always exists \cite[Lemma 3.3]{HHL}, and that any two maximal self-local equivalences give homotopy equivalent $\inv$-complexes $(\im f, \inv_f)$ and $(\im g, \inv_g)$ \cite[Lemma 3.8]{HHL}. We can thus make the unambiguous definition:

\begin{defn}\cite[Definition 3.9]{HHL}\label{defn:2.7}
Let $(C, \inv)$ be an $\inv$-complex. The \textit{connected complex of} $(C, \inv)$, which we denote by $(C_{\mathrm{conn}}, \inv_{\mathrm{conn}})$, is defined to be $(\im f, \inv_f)$, where $f$ is any maximal self-local equivalence. This is well-defined up to homotopy equivalence of $\inv$-complexes.
\end{defn}

Note that since $(C, \inv)$ and $(C_{\mathrm{conn}}, \inv_{\mathrm{conn}})$ are locally equivalent, the connected complex is in fact an invariant of the local equivalence class of $(C, \inv)$ \cite[Proposition 3.10]{HHL}. Indeed, the connected complex should be thought of as the simplest possible local representative of $(C, \inv)$. The connected homology is then defined to be (more or less) the usual homology of this complex:

\begin{defn}\cite[Definition 3.13]{HHL}
Let $(C, \inv)$ be an $\inv$-complex. The \textit{connected homology of} $(C, \inv)$ is defined to be the $U$-torsion submodule of $H_*(C_{\mathrm{conn}})$, shifted up in grading by one. Here, $H_*(C_{\mathrm{conn}})$ is the usual homology of $C_{\mathrm{conn}}$ as a $\ff[U]$-complex.
\end{defn}

The \textit{connected Heegaard Floer homology of} $(Y, \s)$ is then of course defined to be the connected homology of $(\CFm(Y, \s), \inv)$. The grading shift by one is enacted so that $\HF_\mathrm{conn}$ is a summand of $\HF_\mathrm{red}$, viewed as a quotient of $\HFp$. 

\begin{examp}\label{examp:2.9}
The homology of $X_i$ is given by the direct sum of a single non-torsion tower (starting in grading zero) and a $U$-torsion tower $\mathcal{T}_0(i)$ of length $i$. It is not difficult to show that the connected complex of $X_i$ is all of $X_i$, and hence that the connected homology of $X_i$ is precisely $\mathcal{T}_0(i)$ (shifted up in grading by one). See \cite[Lemma 7.2]{HHL}. 


\end{examp}

In \cite[Proposition 4.1]{HHL}, it is shown that the connected Floer homology behaves well under dualization. Note, however, that the connected homology does \textit{not} behave well under tensor product. Roughly speaking, this is due to the fact that even if $(C_1, \inv_1)$ and $(C_2, \inv_2)$ are the simplest representatives of their respective local equivalence classes, the tensor product $(C_1 \otimes C_2, \inv_1 \otimes \inv_2)$ need not be.

In this paper, we will not actually use any properties of the connected Floer homology beyond the fact that it is a (grading-shifted) summand of the usual Floer homology and that it dualizes well. Instead, in light of Theorem~\ref{thm:2.5}, we will focus on producing ``simple" representatives of linear combinations of the $X_i$. While each of the $X_i$ is individually very straightforward, we will see that it is actually quite subtle to construct good local representatives of their tensor products.


\section{Geometric Operations}\label{sec:3}

In this section, we review the definition of a geometric complex and outline several related constructions which we will use to describe the $\inv$-complexes of sums of Seifert fibered spaces. See \cite[Section 7]{DM} and \cite[Section 2.4]{DS} for background on geometric complexes and associated applications.

\subsection{Geometric Complexes} \label{sec:3.1} Let $C_{cell}$ be a finite cell complex. Suppose that we are given a function $\gr$ from the cells of $C_{cell}$ to a coset of $2\ZZ$ in $\mathbb{Q}$, with the property that whenever a cell $e_{i-1}$ appears in the cellular boundary of $e_i$, we have $\gr(e_{i-1}) \geq \gr(e_i)$. Let $C = C_{cell} \otimes \ff[U]$. We turn $C$ into a (relatively) $\mathbb{Z}$-graded chain complex over $\ff[U]$ as follows. For any cell $e_i$ of dimension $i$, we define the Maslov grading $M(e_i)$ of $e_i$ to be
\[
M(e_i) = \gr(e_i) + i.
\] 
We extend this to all of $C$ by declaring $U$ to be of degree $-2$. The differential on $C$ is defined to be
\[
\partial e_i = \sum_{e_{i-1} \in \text{ cellular bdry of } e_i} U^{(\gr(e_{i-1})-\gr(e_i))/2}e_{i-1}.
\]
Here, we have simply taken the usual cellular differential and multiplied by the appropriate powers of $U$ everywhere so as to be of degree $-1$ with respect to the Maslov grading. Note that each exponent of $U$ is non-negative due to the condition on $\gr$. We call a chain complex which is defined in this manner a \textit{geometric complex}, and we refer to the underlying cell complex $C_{cell}$ as the \textit{skeleton} of $C$. Note that $C$ comes equipped with an additional grading coming from the usual $\ZZ$-grading on $C_{cell}$, which may be extended to all of $C$ by declaring $U$ to have degree zero (with respect to this grading). We refer to this as the \textit{dimensional grading}.

More conceptually, all we are doing is presenting a convenient bookkeeping device for thinking about $C$. We view the data of $C$ as separated into two distinct parts - first, the skeleton $C_{cell}$, which is a chain complex in the usual sense over $\ff$; and second, the data consisting of the Maslov gradings of the generators of $C$, which is encoded in the function $\gr$ together with the dimensional grading. We think of the skeleton as giving the ``shape" of differential on $C$, with the correct powers of $U$ in the differential being forced by the knowledge of the Maslov gradings.

In this paper, all of our geometric complexes will come equipped with a cellular chain map $J$ on $C_{cell}$ with $J^2 = 1$. Geometrically, this corresponds to reflection through some point in $C_{cell}$. If the grading function $\gr$ is invariant with respect to $J$, then by imposing $U$-equivariance we can extend $J$ to a grading-preserving chain involution on all of $C$ (which we also denote by $J$). Moreover, in all of the cases we will be discussing, the action of $J$ will have exactly one fixed cell $\eta$ with $J\eta = \eta$; all other cells occur in $J$-symmetric pairs. We refer to a geometric complex with this property as a \textit{split complex}.

\begin{examp}\label{examp:3.1}
The prototypical example of a (split) geometric complex is the complex $X_i$ of Definition~\ref{defn:2.2}. The skeleton of $X_i$ is given by a (rather trivial) one-dimensional cell complex, consisting of two 0-cells corresponding to $\alpha$ and $J\alpha$, and a single 1-cell corresponding to $\beta$. The action of $J$ interchanges $\alpha$ and $J\alpha$ and sends $\beta$ to itself. This complex, together with its associated grading function $\gr$, is displayed on the right in Figure~\ref{fig:3}. See \cite[Section 2.4]{DS} for further discussion.

\begin{figure}[h!]
\center
\includegraphics[scale=1]{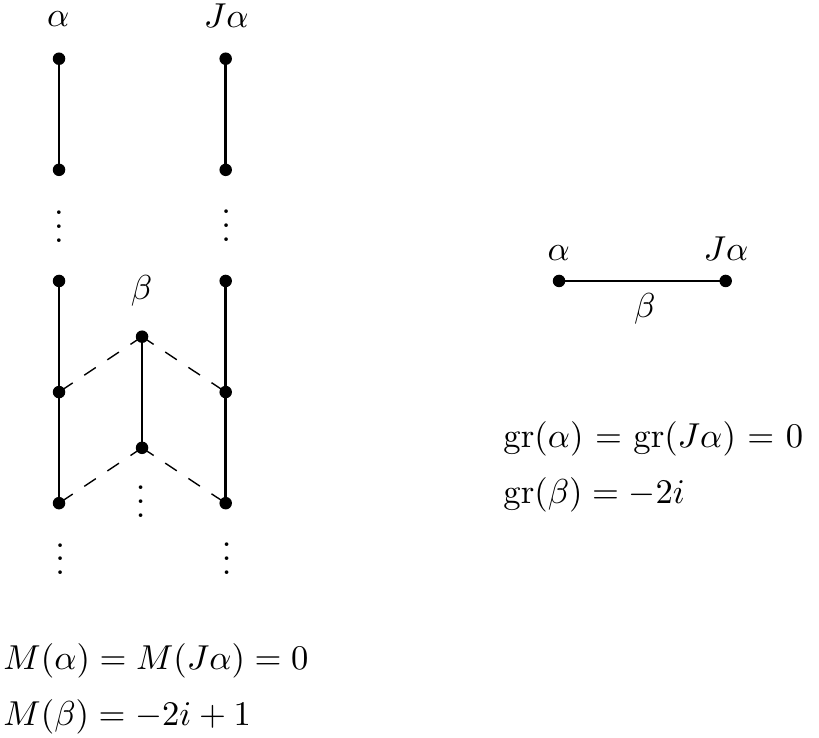}
\caption{The complex $X_i$ (left) and its associated skeleton (right). On the left, solid lines represent multiplication by $U$ and dashed lines represent the action of $\partial$.}\label{fig:3}
\end{figure}
\end{examp}
If $C_{cell}$ has trivial homology, then it is easy to check that the pair $(C = C_{cell} \otimes \ff[U], J)$ forms an $\inv$-complex. When we say that an $\inv$-complex is locally equivalent to some geometric complex with involution $J$, we of course mean that our $\inv$-complex is locally equivalent to $(C, J)$ in the usual sense. See \cite[Lemma 3.3]{DS}.

If $C_1$ and $C_2$ are two geometric complexes, then their tensor product is also a geometric complex, constructed as follows. The skeleton of $C_1 \otimes C_2$ is given by the usual cellular product of the skeleta of $C_1$ and $C_2$, and the grading function $\gr$ is defined by setting $\gr(x \times y) = \gr(x) + \gr(y)$ for any pair of cells $x$ from $C_1$ and $y$ from $C_2$. If $C_1$ and $C_2$ are split, then $C_1 \otimes C_2$ is also split, with the involution on $C_1 \otimes C_2$ acting coordinatewise. See \cite[Section 2.4]{DS}.

We now give two important families of geometric complexes:
\begin{examp}\label{examp:3.2}
Consider an alternating sum of $X_i$; i.e.,
\[
X_{i_1} - X_{i_2} + X_{i_3} - \cdots \pm X_{i_n}
\]
with $i_1 > i_2 > \cdots > i_n$. By results of \cite{DS}, this is locally equivalent to a one-dimensional geometric complex. (According to \cite[Theorem 4.2]{DS}, the above sum is locally equivalent to a monotone root, which in turn can be described by a one-dimensional geometric complex as constructed in \cite[Section 7]{DM}.) There are two possible forms for this complex, both of which are displayed in Figure~\ref{fig:4}. If $n$ is even (so that the sign in front of $X_{i_n}$ is a minus), then the skeleton of the relevant complex has a $J$-invariant 0-cell; otherwise, the skeleton has a $J$-invariant 1-cell. We have labeled the 0-cells of our complex as $v_k$ and $Jv_k$ (with $k$ even) and the 1-cells of our complex as $w_k$ and $Jw_k$ (with $k$ odd). The grading function $\gr$ is given by
\begin{align*}
&\gr(v_k) = \gr(Jv_k) = 2(-i_1 + i_2 - \cdots + i_k), \text{ and} \\
&\gr(w_k) = \gr(Jw_k) = 2(-i_1 + i_2 - \cdots - i_k),
\end{align*}
with $\gr(v_0) = \gr(Jv_0) = 0$.

\begin{figure}[h!]
\center
\includegraphics[scale=1]{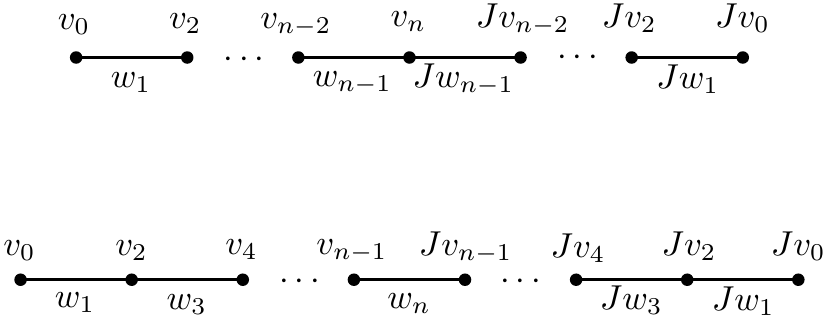}
\caption{Geometric complexes of alternating sums with $n$ even (top) and $n$ odd (bottom).}\label{fig:4}
\end{figure}

It is straightforward to compute the (usual) homology of this complex and show that its torsion part is given by the procedure described in Theorem~\ref{thm:1.1}. (To see this, it is easier to think of the complex as a monotone root, as in \cite[Section 6]{DM}.) The result is schematically depicted in Figure~\ref{fig:5}. By \cite[Proposition 7.5]{HHL}, this is indeed equal (after grading shift) to the connected homology of $X_{i_1} - X_{i_2} + \cdots \pm X_{i_n}$.

\begin{figure}[h!]
\center
\includegraphics[scale=0.8]{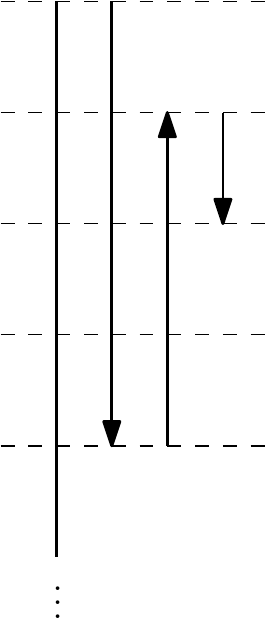}
\caption{Homology of an alternating sum ($n$ odd). Dotted lines are separated by Maslov grading two, with the uppermost line having Maslov grading zero. This example is $X_5 - X_4 + X_2$.}\label{fig:5}
\end{figure}

\end{examp}

\begin{examp}\label{examp:3.3}
Consider a sum of $X_i$ with the same orientation; i.e.,
\[
X_{i_1} + X_{i_2} + X_{i_3} + \cdots + X_{i_n}
\]
with $i_1 \geq i_2 \geq \cdots \geq i_n$. By \cite[Lemma 5.2]{DS}, this is locally equivalent to a geometric complex whose skeleton is a $J$-symmetric decomposition of the $n$-ball. This is displayed in Figure~\ref{fig:6}. For each $0 \leq k < n$, there are two cells of dimension $k$, which we have labelled $e_k$ and $Je_k$, while there is a single $J$-symmetric cell $e_n$ of dimension $n$. The grading function $\gr$ is given by
\[
\gr(e_k) = \gr(Je_k) = 2(-i_1 - i_2 - \cdots - i_k),
\]
with $\gr(e_0) = \gr(Je_0) = 0$.

\begin{figure}[h!]
\center
\includegraphics[scale=1]{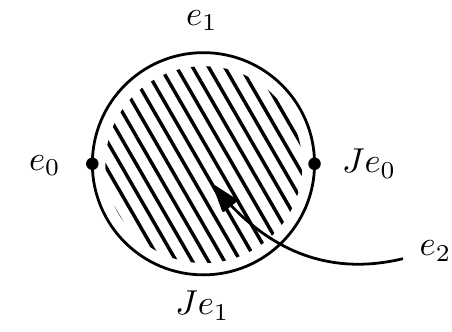}
\caption{Geometric complex of same-sign sums.}\label{fig:6}
\end{figure}

Again, it is straightforward to compute the (usual) homology of this complex and show that its torsion part is given the procedure described in Theorem~\ref{thm:1.1}. (Generators of the torsion towers are given by the obvious cycles $e_k + Je_k$.) The result is schematically depicted in Figure~\ref{fig:7}. By \cite[Proposition 7.1]{HHL}, this is indeed equal (after grading shift) to the connected homology of $X_{i_1} + X_{i_2} + \cdots + X_{i_n}$.

\begin{figure}[h!]
\center
\includegraphics[scale=0.6]{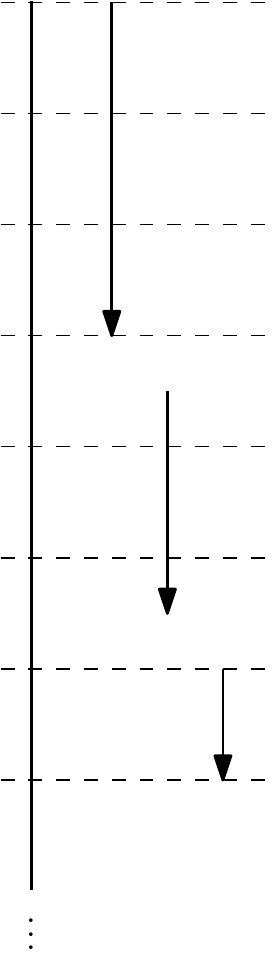}
\caption{Homology of same-sign sums. Dotted lines are separated by Maslov grading two, with the uppermost line having Maslov grading zero. This example is $X_4 + X_3 + X_2$.}\label{fig:7}
\end{figure}
\end{examp}

Examples~\ref{examp:3.2} and \ref{examp:3.3} give geometric representatives for the local equivalence classes of alternating sums and same-sign sums of $X_i$, respectively. In both cases, the given representatives have the property that the connected homology is given by (the torsion part of) the usual homology. This is not very surprising, as one might guess that the complexes in question are indeed the simplest possible representatives of their local equivalence classes. (Taking the tensor product of $n$ of the $X_i$ without any simplification yields a complex with $3^n$ generators; our model complex has $2n + 1$ generators.) The goal for the remainder of this paper will be to produce similar representatives for arbitrary linear combinations of the $X_i$.

Before we continue, we will need to make one more definition involving geometric complexes. Let $C$ be a geometric complex with skeleton $C_{cell}$ and grading function $\gr$. For each cell $e_i$ of $C_{cell}$, we define
\[
\width(e_i) = \min_{e_{i-1} \in \text{ cellular bdry of } e_i} \{ \gr(e_{i-1})-\gr(e_i) \},
\]
with the understanding that if $\partial e_i = 0$ then $\width(e_i) = \infty$. We define the \textit{width} of the complex $C$ to be the minimum width over all its cells; i.e.,
\[
\width(C) = \min\{ \width(e_i) \}.
\]
Note that this means any term appearing in the boundary map $\partial$ on $C$ must appear with a $U$-power of at least $\width(C)/2$.

\subsection{Doubling Operations}\label{sec:3.2}
In this section, we sketch some constructions which will allow us to inductively produce local representatives for linear combinations of the $X_i$. Our exposition here is meant to convey intuition and form a geometric overview; see Section~\ref{sec:4.1} for the precise details.

Consider the cell complexes displayed in Figure~\ref{fig:8}. On the left, we have displayed the skeleta of several split complexes, while on the right, we have displayed a corresponding series of cell complexes obtained in each case by ``doubling" the single $J$-invariant cell on the left. More precisely, to pass from the left- to the right-hand side, we replace the single $J$-invariant cell $\eta$ on the left with a new pair of symmetric cells (denoted by $\omega$ and $J\omega$), each of which have the same boundary (if any) as $\eta$. We then fill in the resulting ``gap" with a single $J$-invariant cell $\theta$ of one higher dimension, whose boundary is the sum of our two new cells. 

\begin{figure}[h!]
\center
\includegraphics[scale=0.9]{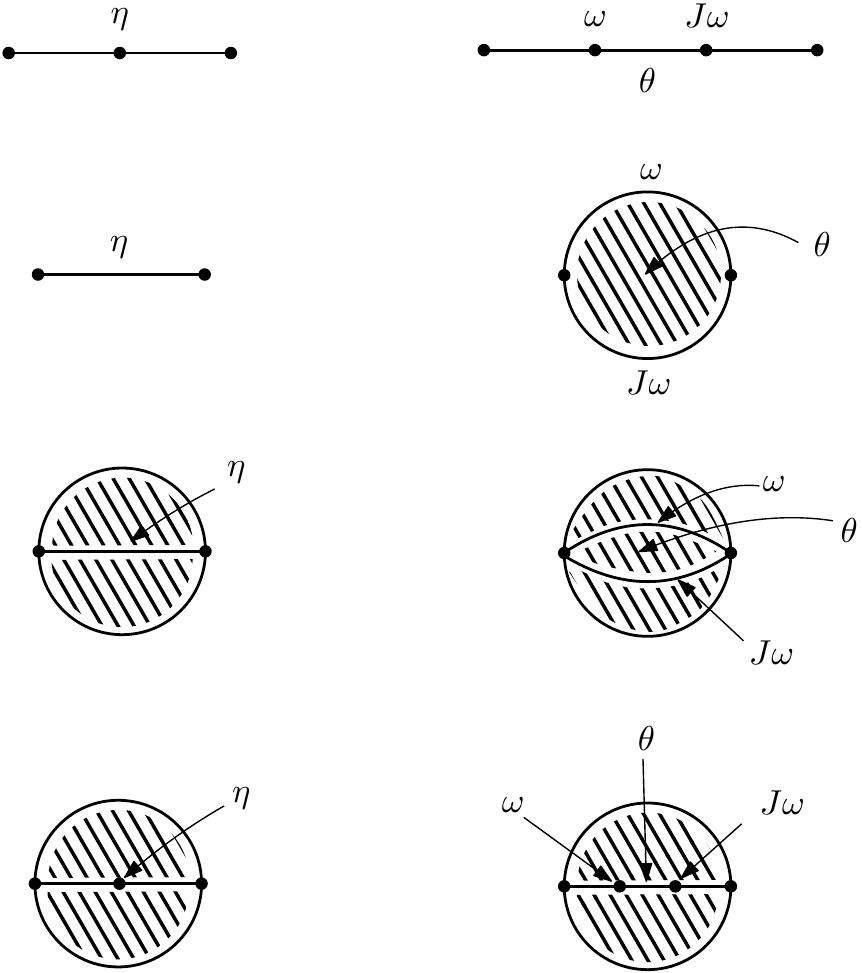}
\caption{Doublings of split complexes.}\label{fig:8}
\end{figure}

Now let $X$ be any split complex, and let $\Delta$ be any non-negative integer with $2\Delta \leq \width(X)$. We define a new split complex $X^d(\Delta)$, called the \textit{double} of $X$ (with parameter $\Delta$), as follows. The skeleton of $X^d(\Delta)$ is given by applying the above doubling operation to the skeleton of $X$. The function $\gr$ on $X^d(\Delta)$ is defined as follows:
\begin{enumerate}
\item On every cell except $\omega$, $J\omega$, and $\theta$, the function $\gr$ is equal to its value on the corresponding cell from $X$;
\item $\gr(\omega) = \gr(J\omega) = \gr(\eta)$; and,
\item $\gr(\theta) = \gr(\eta) - 2\Delta$.
\end{enumerate}
\noindent
In Section~\ref{sec:4.1} we will show that $X^d(\Delta)$ is a well-defined split geometric complex (i.e., that the function $\gr$ satisfies the usual monotonicity condition with respect to the cellular differential) and explain the condition $2\Delta \leq \width(X)$. For now, the reader should take a moment to verify that for the representative complexes of Example~\ref{examp:3.2}, the alternating sum $X_{i_1} - X_{i_2} + \cdots + X_{i_n}$ (for $n$ odd) is the double (with parameter $i_n$) of $X_{i_1} - X_{i_2} + \cdots - X_{i_{n-1}}$. Similarly, for the complexes of Example~\ref{examp:3.3}, we have that $X_{i_1} + X_{i_2} + \cdots + X_{i_n}$ is the double (with parameter $i_n$) of $X_{i_1} + X_{i_2} + \cdots + X_{i_{n-1}}$. Compare with the first two examples in Figure~\ref{fig:8}.

The main technical result of this paper will be to show that the doubled complex $X^d(\Delta)$ is locally equivalent to $X \otimes X_{\Delta}$ in general. This will allow us to construct local representatives of linear combinations of the $X_i$ via induction. Note that passing from $X \otimes X_{\Delta}$ to $X^d(\Delta)$ represents significant reduction in complexity, since if $X$ has $g$ generators, then $X \otimes X_{\Delta}$ has $3g$ generators, while $X^d(\Delta)$ has $g + 2$ generators.

We should stress here that for very simple complexes, it is straightforward to describe the cellular boundary map of the doubled complex in terms of the original. For example, the double of a spherical complex is a spherical complex of one higher dimension. However, sometimes the relation between the two cellular boundary maps is slightly more subtle. For instance, in the fourth example of Figure~\ref{fig:8}, we must modify the boundary map on the 2-cells incident to our original $J$-invariant 0-cell to include the new $J$-invariant 1-cell $\theta$.

The reader may thus (rightly) object that we have not really defined how to construct the skeleton of $X^d(\Delta)$. While in the examples above it is more or less obvious how to ``double" the $J$-invariant cell of $X$, one might wonder whether this is always well-defined, or indeed whether there may be multiple ways to carry out this operation. For example, Figure~\ref{fig:9} shows a split complex with two putative doublings. In Section~\ref{sec:4.1} we will provide a more precise definition of the doubling operation by explicitly defining the doubled complex and its boundary map; this will turn out to depend on an additional set of choices. However, we will in fact show that \textit{any} choice of doubling is locally equivalent to $X \otimes X_{\Delta}$, so for our purposes this ambiguity is unimportant.

\begin{figure}[h!]
\center
\includegraphics[scale=0.6]{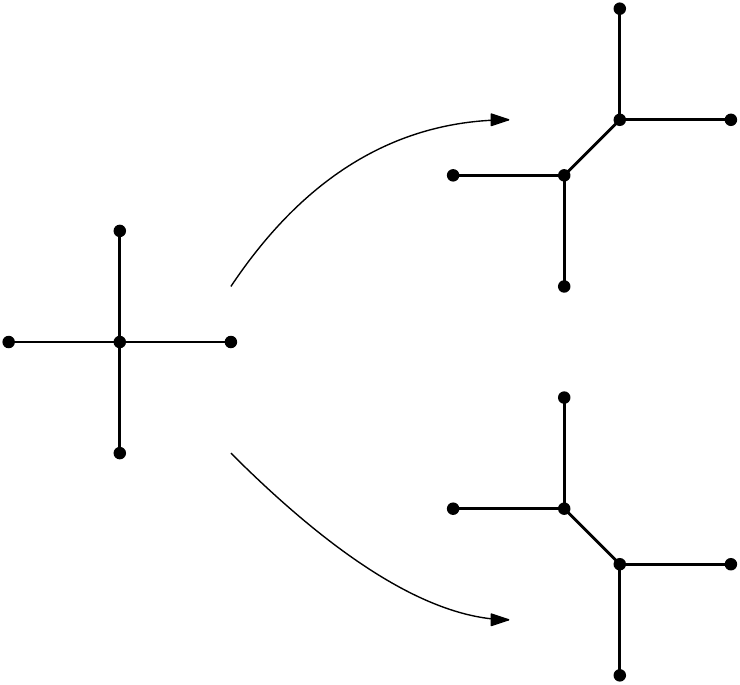}
\caption{A split complex which appears to have two possible doublings.}\label{fig:9}
\end{figure}

\subsection{Halving Operations}\label{sec:3.3}
Let us briefly review the procedure for dualizing $\inv$-complexes given in \cite[Section 8.3]{HMZ}. If $\{x_i\}$ is a homogenous $\ff[U]$-basis for $X$, then the dual complex $X^*$ has a corresponding $\ff[U]$-basis given by $\{x_i^*\}$, where the Maslov grading of $x_i^*$ is $M(x_i^*) = - M(x_i)$. The differential on $X^*$ is defined so that $U^kx_b^*$ appears in $\partial^* x_a^*$ if and only if $U^k x_a$ appears in $\partial x_b$. Similarly, an element $U^k x_b^*$ appears in $\inv^* x_a^*$ if and only if $U^k x_a$ appears in $\inv x_b$. Note that the action of $U$ on $X^*$ still has degree $-2$, and the differential still drops the Maslov grading by one.

If $X$ is a geometric complex, then $X^*$ is not quite a geometric complex in the sense defined previously. For example, the dual of $X_i$ has three generators $\alpha^*$, $J\alpha^*$, and $\beta^*$, with the differential
\[
\partial^* (\alpha^*) = \partial^* (J\alpha^*) = U^i \beta^*
\]
and $\partial^* \beta^* = 0$. Hence the putative ``skeleton" of $X_i^*$ has two 1-cells (corresponding to $\alpha^*$ and $J\alpha^*$), each having boundary consisting of only one 0-cell (corresponding to $\beta^*$). While this evidently does not correspond to a cell complex in the usual sense, it is not difficult to see that we may view this as a \textit{relative} cell complex, as in Figure~\ref{fig:10}. Here, the homology is no longer isomorphic to the usual singular homology of our cell complex, but rather the relative homology of our cell complex with respect to some subcomplex.

We thus relax the definition of a geometric complex slightly by not requiring the skeleton to be an actual cell complex, but simply a (finite) $\mathbb{Z}$-graded chain complex over $\ff$. (We will still refer to generators of the skeleton as cells, and the $\mathbb{Z}$-grading as the dimensional grading.) All previous notions such as split complexes, width, and so on, apply equally well to this more general class of complexes; as we will see in Section~\ref{sec:4.1}, there will be no trouble in defining the doubling operation for these complexes also. If $X$ has skeleton $C_{cell}$ with generators $e_i$, we may consider the dual complex $C_{cell}^*$ spanned by the obvious generators $e_i^*$, where $e_b^*$ appears in $\partial^*e_a^*$ if and only if $e_a$ appears in $\partial e_b$. This is equipped with the grading function $\gr(e_i^*) = - \gr(e_i)$, and the dimensional grading is defined so that $e_i^*$ has dimensional grading $-i$. The usual construction then exhibits $X^*$ as a geometric complex with skeleton $C_{cell}^*$.

To reconcile this with the geometric picture, observe that if we shift the dimensional grading up by $n$ while shifting $\gr$ down by $n$, the Maslov grading is preserved. Hence enacting complementary grading shifts does not change the complex $X$. We could thus have equally well defined $\gr(e_i^*) = - \gr(e_i) - n$ and set the dimensional grading of $e_i^*$ to be $n - i$, where $n$ is the dimension of the original skeleton $C_{cell}$. If $C_{cell}$ is an actual cell complex, then the complex $C_{cell}^*$ with this (now non-negative) dimensional grading may be represented by the dual cell complex of $C_{cell}$ in the geometric sense, as in Figure~\ref{fig:10}. Dualizing Examples~\ref{examp:3.2} and \ref{examp:3.3} yield geometric complexes whose skeleta are given by the relative cell complexes of Figure~\ref{fig:10}. See \cite[Remark 5.3]{DS}.

\begin{figure}[h!]
\center
\includegraphics[scale=0.9]{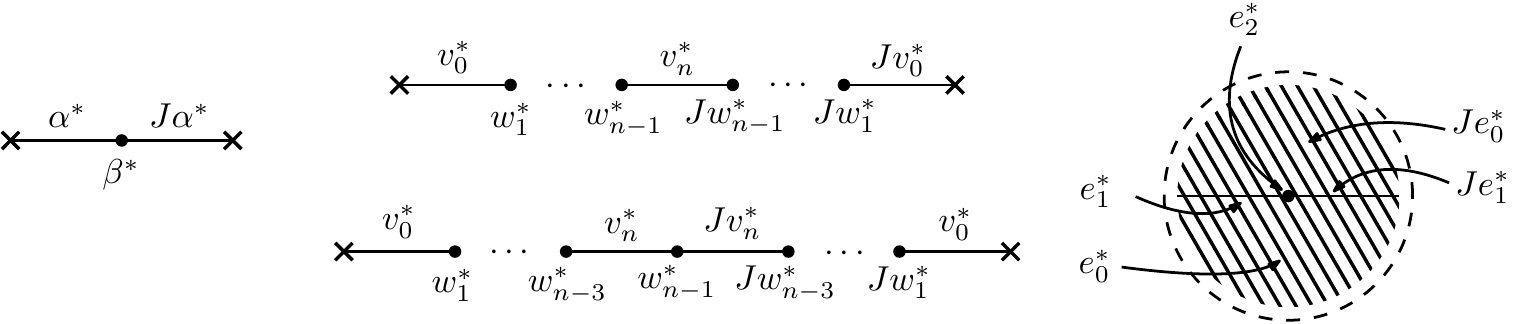}
\caption{The skeleton of $X_i^*$ (left); duals of Example~\ref{examp:3.2} (middle); dual of Example~\ref{examp:3.3} (right). Here, crosses or dashed lines indicate the ``relative" part of the cell complex. For example, on the right-hand side, we have $\partial^* e_0^* = e_1^* + Je_1^*$.}\label{fig:10}
\end{figure}

The dual of a geometric complex is thus also a geometric complex, with the same width and the same number of generators. If the original complex is split, then its dual is split also.

While not absolutely necessary for the results stated in Section~\ref{sec:1}, it is perhaps useful for us to describe the operation dual to doubling. This corresponds to tensoring with $X_\Delta^*$ (rather than $X_{\Delta}$). Consider the cell complexes displayed in Figure~\ref{fig:11}. On the left, we have displayed the skeleta of several split complexes, while on the right we have displayed a corresponding sequence of complexes obtained in each case by ``halving" the $J$-invariant cell on the left. This consists of replacing the old $J$-invariant cell $\eta^*$ with the sum of a new pair of symmetric cells (denoted by $\omega^*$ and $J\omega^*$), which meet along a new $J$-invariant cell $\theta^*$ of one lower dimension.

\begin{figure}[h!]
\center
\includegraphics[scale=0.9]{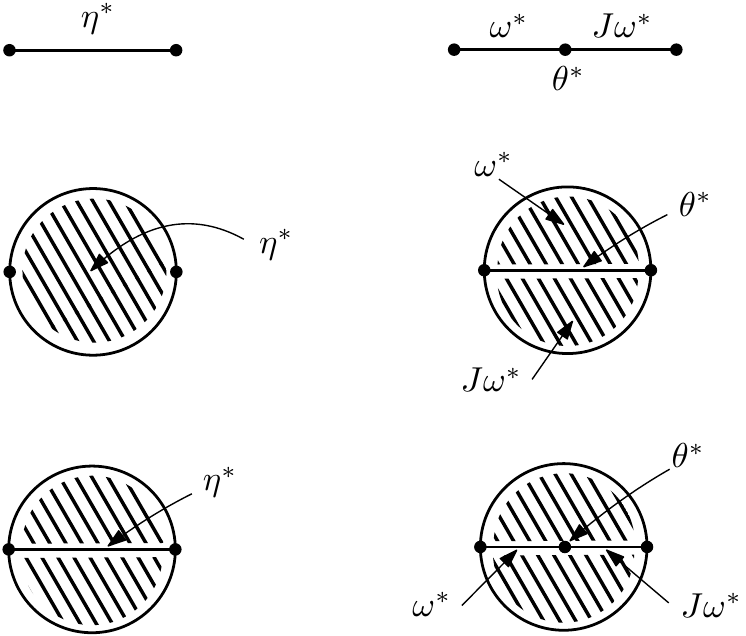}
\caption{Halvings of split complexes.}\label{fig:11}
\end{figure}

There is a slight additional subtlety in the case that $\eta^*$ is a 0-cell. In this situation, we must first take the product of our complex with the one-dimensional complex displayed on the left in Figure~\ref{fig:12}. This has one generator, which we think of as a 1-cell instead of a 0-cell. The value of $\gr$ on this generator is defined to be $-1$, so that its Maslov grading is zero. Taking the product thus algebraically does not change our chain complex, but boosting the dimension by one in the geometric picture allows us to ``half" the $J$-invariant cell, which is now one-dimensional. (We simply shift the dimensional grading up by one and decrease $\gr$ by one, as described previously.) See Figure~\ref{fig:12}.

\begin{figure}[h!]
\center
\includegraphics[scale=0.9]{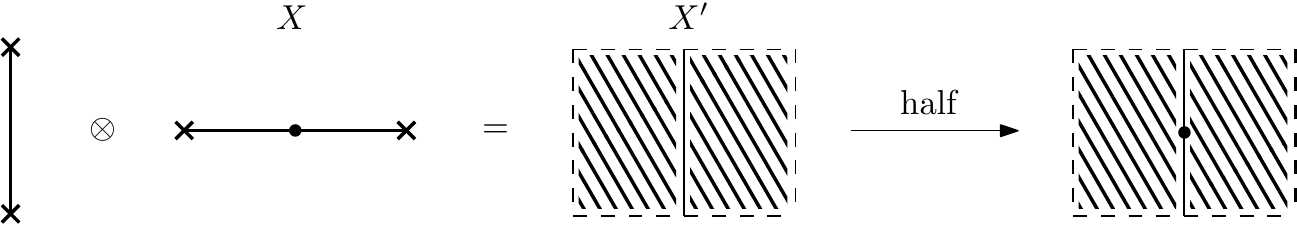}
\caption{Halving a complex $X$ with a $J$-invariant 0-cell. First, we cross $X$ with the complex on the left to obtain an algebraically equivalent complex $X'$, but which we view as having increased dimension. Note that in this example, the result of halving gives a complex equivalent to the one displayed on the right in Figure~\ref{fig:10}.}\label{fig:12}
\end{figure}

Now let $X$ be any split complex, and let $\Delta$ be any non-negative integer with $2\Delta \leq \width(X)$. The \textit{half} of $X$ (with parameter $\Delta$), denoted by $X^h(\Delta)$, is constructed by applying the above halving operation to the skeleton of $X$ and defining $\gr$ as follows:
\begin{enumerate}
\item On every cell except $\omega^*$, $J\omega^*$, and $\theta^*$, the function $\gr$ is equal to its value on the corresponding cell from $X$;
\item $\gr(\omega^*) = \gr(J\omega^*) = \gr(\eta^*)$; and,
\item $\gr(\theta^*) = \gr(\eta^*) + 2\Delta$.
\end{enumerate}
Again, the reader may legitimately worry that we have not rigorously defined the halving operation on skeleta. For now, it is perhaps better to take this as a motivational picture and verify that the halving operation is well-defined and produces the desired complexes in the case of Example~\ref{examp:3.2} and (the dual of) Example~\ref{examp:3.3}.


\section{Split Complexes and Doubling} \label{sec:4}
In this section, we give a precise construction of the doubling operation. We then prove that for any split $\inv$-complex $X$, the doubled complex $X^d(\Delta)$ is locally equivalent to $X \otimes X_{\Delta}$.

\subsection{Doubling Operations, Revisited} \label{sec:4.1}
Let $X$ be a split geometric complex with skeleton $C_{cell}$. Recall this means that $C_{cell}$ has exactly one $J$-invariant cell $\eta$, and all the other cells of $C_{cell}$ occur in $J$-symmetric pairs. We partition the cells of $C_{cell}$ by choosing one cell from each such pair and denoting the span of these cells by $C$. Then $C_{cell}$ (as a module over $\ff$) decomposes as a direct sum
\[
C_{cell} = C \oplus JC \oplus \eta,
\]
where we have written $\eta$ to mean the (single-element) submodule spanned by $\eta$. Note that this splitting is not preserved by $\partial$. Usually when we discuss split complexes we will have in mind a particular splitting obtained by choosing some $C$.

It will be useful to also consider the related splitting $C \oplus (1+ J)C \oplus \eta$, in which case we can write the differential of any $x \in C$ uniquely as
\begin{align*}
\partial x = 
\begin{cases} 
      a + (1 + J)b, \text{ or} \\
      a + (1 + J)b + \eta
\end{cases}
\end{align*}
for $a, b \in C$. We will sometimes use parentheses to denote the possibility of adding $\eta$; i.e., $\partial x = a + (1 + J)b \ (+ \eta)$. Similarly, the fact that $J$ commutes with $\partial$ and that $J\eta = \eta$ implies that
\[
\partial \eta = (1 + J) \zeta
\]
for some $\zeta \in C$. Note that $a$, $b$, and $\zeta$ are allowed to be sums of cells and/or zero. We will continue to use the notation $\partial x = a + (1 + J)b \ (+ \eta)$ throughout the rest of this section.

We now make a proper definition of $X^d(\Delta)$. We begin with the skeleton of $X^d(\Delta)$ as a set:

\begin{defn}\label{defn:4.1}
Let $X$ be a split complex with a particular choice of splitting $C$, and let $\Delta$ be a non-negative integer with $2\Delta \leq \width(X)$. As a module over $\ff$, the skeleton of $X^d(\Delta)$ is defined to be the direct sum
\[
(C \oplus \omega) \oplus (JC \oplus J\omega) \oplus \theta.
\]
Here, $C$ and $JC$ are copies (as modules over $\ff$) of their corresponding summands in $C_{cell}$, while $\omega$, $J\omega$, and $\theta$ are new generators in the skeleton of $X^d(\Delta)$. (We have again abused notation and written individual generators to mean their span over $\ff$.) The action of $J$ interchanges $C$ and $JC$ as before, interchanges $\omega$ and $J\omega$, and sends $\theta$ to itself. Note that this new skeleton is also split, with a choice of splitting given by $(C \oplus \omega)$.
\end{defn}

The differential on $X^d(\Delta)$ is defined as follows:

\begin{defn}\label{defn:4.2}
The differential $\partial^d$ on the skeleton of $X^d(\Delta)$ is defined by modifying the original differential $\partial$ on the skeleton of $X$ to take into account the new cells. We have the following casework:
\begin{enumerate}
\item Let $x \in C$. Then there are three cases: 
\[
\partial^dx = 
\begin{cases} 
      a + (1 + J)b, \text{ if }\partial x = a + (1 + J)b \text{ and } \partial b \text{ does not contain } \eta; \\
      a + (1 + J)b + \theta, \text{ if }\partial x = a + (1 + J)b \text{ and } \partial b \text{ does contain } \eta; \\
      a + (1 + J)b + \omega, \text{ if }\partial x = a + (1 + J)b + \eta. 
\end{cases}
\]
\item $\partial^d\omega = \partial \eta = (1+ J) \zeta$, and 
\item $\partial^d\theta = (1 + J)\omega$.
\end{enumerate}
The differential on $JC$ and $J\omega$ is defined by requiring $J$-equivariance.
\end{defn}

The three cases for $\partial^dx$ above correspond to different situations as in Figure~\ref{fig:8}. The first case is the most straightforward, where $x$ is either sufficiently ``far away" from $\eta$ or is of very different dimension, and the boundary map on $x$ does not need to be modified. (We will sometimes write this as $\partial^dx = \partial x$.) The second case corresponds to a situation in which the boundary of $x$ must be modified to include the new $J$-invariant cell $\theta$. (Consider the 2-cell in the fourth example of Figure~\ref{fig:8}.) Finally, the third case corresponds to the situation where the original boundary of $x$ contains $\eta$, and $\eta$ must be replaced by the corresponding cell $\omega$. (Consider the 2-cell in the third example of Figure~\ref{fig:8}.)

We now verify that $\partial^d$ is a differential (squares to zero). The reader who already believes that the geometric doubling operation is well-defined (and given by the above algebraic construction) may skip this verification, but should be cautioned that the casework below will turn out to be useful in later sections.

\begin{lem}
The differential $\partial^d$ turns $X^d(\Delta)$ into a chain complex; i.e., it squares to zero.
\end{lem}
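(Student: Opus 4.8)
The plan is to verify $(\partial^d)^2 = 0$ by direct computation on each generator, using the fact that $\partial^2 = 0$ on the original skeleton $C_{cell}$ and keeping careful track of the new cells $\omega$, $J\omega$, and $\theta$. By $J$-equivariance it suffices to check the identity on $x \in C$ and on $\omega$ and $\theta$; the cases for $JC$ and $J\omega$ follow automatically.

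First I would dispose of the easy generators. For $\theta$, we have $\partial^d \theta = (1+J)\omega$, so $(\partial^d)^2 \theta = (1+J)\partial^d \omega = (1+J)^2 \zeta = 0$ since $(1+J)^2 = 1 + 2J + J^2 = 0$ over $\ff = \mathbb{Z}/2$ (using $J^2 = 1$). For $\omega$, we have $\partial^d \omega = (1+J)\zeta = \partial \eta$, and since $\partial\eta$ is expressed in terms of cells of $C \oplus JC$ only (no $\eta$, since $\partial^2\eta = 0$ would otherwise force $\eta$ to appear with multiplicity controlled by $\partial^2 = 0$), I need $\partial^d$ applied to $(1+J)\zeta$ to vanish. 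Here I must be careful: $\partial^d$ on elements of $C$ may differ from $\partial$ by the addition of a $\theta$ term. Writing $\zeta = \sum \zeta_j$ as a sum of cells, the terms $\partial^d \zeta_j$ that differ from $\partial \zeta_j$ contribute copies of $\theta$, but these come in the combination $(1+J)(\text{copies of }\theta)$, which vanishes because $J\theta = \theta$ and we are in characteristic $2$. Hence $(\partial^d)^2 \omega = (1+J)\partial\zeta = \partial((1+J)\zeta) = \partial\partial\eta = 0$.

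The main case — and the one I expect to be the genuine bookkeeping obstacle — is $x \in C$. I would organize this by the three cases in Definition~\ref{defn:4.2}. In each case $\partial^d x = a + (1+J)b\ (+\,\omega \text{ or } + \theta)$, and I apply $\partial^d$ again. The key inputs are: (i) from $\partial^2 x = 0$ in $C_{cell}$, expanding $\partial(a + (1+J)b\ (+\eta))$ and sorting by the $C \oplus (1+J)C \oplus \eta$ decomposition gives relations among $\partial a$, $\partial b$, $\partial\eta$, and the ``$(+\eta)$'' flags of various cells; (ii) the new $\theta$- and $\omega$-terms introduced by $\partial^d$ always appear inside a factor of $(1+J)$ or get cancelled by matching $\theta$'s, again using characteristic $2$ and $J\theta = \theta$, $J\omega = JC$-companion. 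Concretely: when $\partial x = a + (1+J)b$ with $\eta \notin \partial b$, the term $a$ may itself have $\eta$ in its boundary, so $\partial^d a$ contributes $\omega$; but the condition $\partial^2 x = 0$ forces the $\eta$-component of $\partial a + \partial(1+J)b$ to vanish, and tracing this shows the $\omega$-contributions cancel in pairs (or vanish mod $2$). The case $\partial x = a + (1+J)b + \eta$ requires $\partial^d x = a + (1+J)b + \omega$, and then $\partial^d \omega = (1+J)\zeta = \partial\eta$ exactly reproduces the term that $\partial\eta$ contributed to $\partial^2 x = 0$, so the cancellation is inherited. Throughout, the only subtlety is ensuring that the ``does $\partial b$ contain $\eta$'' dichotomy in Definition~\ref{defn:4.2}(1) is consistent — i.e.\ that the $\theta$-terms introduced when differentiating the $(1+J)b$ part match up — and this is precisely where writing out the expansion of $\partial^2 x = 0$ in the refined decomposition pays off. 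I would present the argument as a short lemma-internal casework, noting that every spurious new-cell term is either $J$-invariant (hence killed by the surrounding $(1+J)$) or appears an even number of times.
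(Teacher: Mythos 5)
Your overall approach — verify $(\partial^d)^2=0$ by casework on each generator, keeping track of the extra $\omega$ and $\theta$ terms and using $\partial^2=0$, characteristic 2, and $J\theta=\theta$ — is exactly the approach the paper takes. The $\theta$ case is handled correctly and identically. However, your treatment of the $\omega$ case has a real gap, and your sketch of the $x\in C$ case is built on a false intermediate claim.

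In the $\omega$ case you argue that $\partial^d\zeta_j$ can differ from $\partial\zeta_j$ only by $\theta$-terms, which then die inside the $(1+J)$ factor. But Definition~\ref{defn:4.2} allows a second kind of difference: if $\partial\zeta_j$ were to contain $\eta$, then $\partial^d\zeta_j$ would replace $\eta$ by $\omega$, and $(1+J)(\eta+\omega)=\omega+J\omega\neq 0$ would \emph{not} cancel. Your cancellation argument simply does not address this possibility. The correct (and shorter) reasoning is the one the paper uses: since $\zeta$ appears in $\partial\eta$, every cell of $\zeta$ has dimension $\dim\eta-1$, so neither $\partial\zeta$ nor the boundary of any cell appearing in $\partial\zeta$ can contain $\eta$ for dimensional reasons; hence $\partial^d\zeta=\partial\zeta$ on the nose, with nothing to cancel. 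You should invoke the dimensional grading here rather than trying to cancel terms.

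In the main case you write that ``$a$ may itself have $\eta$ in its boundary, so $\partial^d a$ contributes $\omega$,'' with these $\omega$-contributions then cancelling ``in pairs.'' This is not what happens. Writing $\partial a = p + (1+J)q + \delta_a\eta$ and $\partial b = r + (1+J)s + \delta_b\eta$, the $\eta$-component of $\partial^2 x$ is precisely $\delta_a\eta$ (the $(1+J)$ kills the $\eta$ coming from $b$), so $\partial^2 x = 0$ forces $\delta_a = 0$: $\partial a$ never contains $\eta$, $\partial^d a = \partial a$, and there is no $\omega$-contribution to cancel. Relatedly, your heuristic ``new $\theta$- and $\omega$-terms always appear inside a factor of $(1+J)$'' is not a valid cancellation principle for $\omega$, since $(1+J)\omega\neq 0$; in the one subcase where a $(1+J)\omega$ does appear (namely when $\partial b$ contains $\eta$), it is instead cancelled by the extra $\theta$ in $\partial^d x$ via $\partial^d\theta = (1+J)\omega$. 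If you carry out the casework following the refined decomposition — separating the subcases by whether $\eta$ lies in $\partial x$, $\partial b$, or $\partial s$, and using the dimensional grading where appropriate — the argument closes cleanly, exactly as in the paper.
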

\begin{proof}
We proceed using copious amounts of casework:
\begin{enumerate}
\item Suppose $x \in C$. We first divide into two cases based on whether or not $\partial x$ contains $\eta$:
\begin{enumerate}
\item Suppose $\partial x$ does not contain $\eta$, so that $\partial x = a + (1 + J)b$. In order to understand $(\partial^d)^2 x$, let us consider the possible forms of $\partial a$ and $\partial b$. Write $\partial a = p + (1 + J)q \ (+ \eta)$ and $\partial b = r + (1 + J)s \ (+ \eta)$, with $p, q, r, s \in C$. Here, we have written $(+ \eta)$ to keep open the possibility of $\partial a$ and/or $\partial b$ containing $\eta$. Since we know $\partial^2x = 0$, however, we immediately see that $\partial a$ cannot contain $\eta$ and that $p = 0$ and $q = r$. Thus we have
\begin{align*}
\partial a &= (1 + J)r \\
\partial b &= r + (1 + J)s \ (+ \eta).
\end{align*}
Now, the fact that $\partial^2 b = 0$ implies that $\partial r$ cannot contain $\eta$. Hence (referring to the definition of $\partial^d$) we have $\partial^d a = \partial a$. We further subdivide into three remaining subcases:
\begin{enumerate}
\item Suppose $\partial b$ does not contain $\eta$, and $\partial s$ does not contain $\eta$. Then $\partial^d x = \partial x$ and $\partial^d b = \partial b$. In this case we obviously have $(\partial^d)^2 x = \partial^2 x = 0$.
\item Suppose $\partial b$ does not contain $\eta$, but $\partial s$ does contain $\eta$. Then $\partial^d b = \partial b + \theta$, and
\begin{align*}
(\partial^d)^2 x &= \partial a + (1 + J)(\partial b + \theta) \\
&= \partial^2x + (1 + J)\theta,
\end{align*}
which is zero, since $J\theta = \theta$.
\item Suppose $\partial b$ does contain $\eta$. Then we have
\begin{align*}
\partial^d x &=  a + (1 + J)b + \theta \\
\partial^d b &= r + (1 + J)s + \omega. 
\end{align*}
Hence
\begin{align*}
(\partial^d)^2 x &= \partial^d( a + (1 + J)b + \theta) \\
&= (1 + J)r + (1 + J)(r + (1 + J)s + \omega) + \partial^d\theta \\ 
&= (1 + J)\omega + \partial^d\theta,
\end{align*}
which is zero, by definition of $\partial^d\theta$.
\end{enumerate}
\item Now suppose $\partial x$ does contain $\eta$, so that $\partial x = a + (1 + J)b + \eta$. Then 
\[
\partial^d x = a + (1 + J)b + \omega.
\]
Due to the dimensional grading, neither $a$ nor $b$ can contain $\eta$ in their differential, and similarly for any cell appearing in $\partial a$ or $\partial b$. Hence $\partial^d a = \partial a$ and $\partial^d b = \partial b$. Since $\partial^d \omega = \partial \eta$, it easily follows that $(\partial^d)^2x = \partial^2 x = 0$.
\end{enumerate}
\item Due to the dimensional grading, $\zeta$ cannot contain $\eta$ in its differential, and similarly for any cell appearing in $\partial \zeta$. Hence $\partial^d \zeta = \partial \zeta$. It follows that $(\partial^d)^2 \omega = \partial^2 \eta = 0$.
\item We evidently have that $(\partial^d)^2 \theta = (1 + J)^2 \zeta = 0$.
\end{enumerate}
By $J$-equivariance, we have that $\partial^d$ squares to zero on $JC$ and $J\omega$ also. This completes the verification that $\partial^d$ is indeed a differential.
\end{proof}

To complete our definition of $X^d(\Delta)$, we finally define the grading function $\gr$ on $X^d(\Delta)$:
\begin{defn}\label{defn:4.3}
We set:
\begin{enumerate}
\item If $x \in C$ (or $JC$), then $\gr(x)$ is the same in $X^d(\Delta)$ as it is in $X$;
\item $\gr(\omega) = \gr(J\omega) = \gr(\eta)$; and
\item $\gr(\theta) = \gr(\eta) - 2\Delta$.
\end{enumerate}
Note that the grading of $\theta$ is the only place where we use the parameter $\Delta$. We remind the reader that the Maslov grading is equal to the sum of the dimensional grading and $\gr$. The dimensions of the cells in $C$ and $JC$ are the same as before, while the dimension of $\omega$ is equal to that of $\eta$, and the dimension of $\theta$ is equal to the dimension of $\eta$ plus one. (Compare Section~\ref{sec:3.2}.)
\end{defn}

In order to verify that this data forms a well-defined geometric complex, we must check the usual monotonicity condition on $\gr$ with respect to $\partial^d$. In all situations except the second case of (1) in Definition~\ref{defn:4.2}, this is either immediate or it follows from the fact that the original differential $\partial$ has this property. In the remaining case, it follows from the fact that $\gr(\theta) = \gr(\eta) - 2\Delta$ and $2\Delta \leq \width(X)$. More precisely, in this situation we know that some summand $b_0$ of $b$ has $\eta$ appearing in its differential. Hence $\gr(x) \leq \gr(b_0) \leq \gr(\eta)$. The condition on $\Delta$ ensures that $\gr(\theta)$ lies between $\gr(b_0)$ and $\gr(\eta)$, so in particular $\gr(x) \leq \gr(\theta)$, as desired.

This completes the verification that $X^d(\Delta)$ is a well-defined geometric complex. Note that $X^d(\Delta)$ is obviously split, and has width $2\Delta$. We think of the skeleton of $X^d(\Delta)$ as being obtained from the skeleton of $X$ via the doubling operation described above, although the reader who is uncomfortable with this point of view can of course rely solely on the algebraic definition.

\subsection{Local Equivalence}\label{sec:4.2}
Let $X$ be a split complex with a choice of splitting given by $C$, and let $X^d(\Delta)$ be its double as defined in Section~\ref{sec:4.1}. The main result of this section will be to show that $X^d(\Delta)$ is locally equivalent to $X \otimes X_{\Delta}$.

We employ the following principle from \cite[Lemma 3.3]{DM}. Let $C_1$ and $C_2$ be two geometric complexes graded by the same coset of $2\ZZ$ in $\mathbb{Q}$, and let $f$ be a cellular chain map from the skeleton of $C_1$ to the skeleton of $C_2$. Suppose that for any pair of cells $x$ from $C_1$ and $y$ from $C_2$ with $y$ appearing in $f(x)$, we have $\gr(y) \geq \gr(x)$. Then we can lift $f$ to a grading-preserving chain map from $C_1$ to $C_2$ by multiplying through by the appropriate powers of $U$; i.e., 
\[
\widetilde{f}(x) = \sum_{y_i \text{ appearing in } f(x)} U^{(\gr(y_i) - \gr(x))/2} y_i.
\]
To construct maps between $X^d(\Delta)$ and $X \otimes X_{\Delta}$, we will produce chain maps between their skeleta and check the monotonicity condition on their gradings separately.

Recall that $X_{\Delta}$ has three generators over $\ff[U]$, which we denote by $\alpha, J\alpha,$ and $\beta$. These have gradings given by $\gr(\alpha) = \gr(J\alpha) = 0$ and $\gr(\beta) = -2\Delta$; the skeleton of $X_{\Delta}$ has differential $\partial \beta = \alpha + J\alpha$.

We begin by constructing a chain map from $X^d(\Delta)$ to $X \otimes X_{\Delta}$. Here, for convenience we use $\cdot$ to denote products of cells; e.g., $x \cdot \alpha$ for the product of $x$ in $X$ and $\alpha$ in $X_{\Delta}$. It may be helpful for the reader to consult Example~\ref{examp:4.4} and Figure~\ref{fig:13}.

\begin{lem}\label{lem:4.1}
We construct a chain map $f$ from $X^d(\Delta)$ to $X \otimes X_{\Delta}$.
\end{lem}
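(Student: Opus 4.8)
The plan is to construct the chain map $f \colon X^d(\Delta) \to X \otimes X_\Delta$ explicitly on the level of skeleta, following the bookkeeping principle of \cite[Lemma 3.3]{DM}: if I produce a cellular chain map whose entries only increase the $\gr$-value, it lifts to an honest grading-preserving $\ff[U]$-chain map. So I would first write down $f$ as an $\ff$-linear map between the underlying cell complexes, then check the monotonicity of gradings, and finally verify $f \partial^d = \partial f$ by casework mirroring the casework in Definitions~\ref{defn:4.1}--\ref{defn:4.2}.

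The natural guess for $f$ is the following. On the ``old'' cells I send $x \mapsto x \cdot \alpha$ for $x \in C$ and $Jx \mapsto Jx \cdot J\alpha$ for $Jx \in JC$ (so that $f$ is $J$-equivariant, using that $J$ on the tensor product acts coordinatewise and interchanges $\alpha, J\alpha$); on the new cells I set $f(\omega) = \zeta \cdot \alpha + J\zeta \cdot J\alpha + \eta' \cdot \beta$ — wait, more carefully: $\omega$ should map to something whose boundary matches, so I expect $f(\omega) = $ (a cycle of the right grading built from $\eta$, the fixed cell, crossed with $\alpha$ and $J\alpha$) plus possibly a $\beta$-term, and $f(\theta) = \eta \cdot \beta$ (or $\omega' \cdot \beta$ for an appropriate cell), since $\gr(\theta) = \gr(\eta) - 2\Delta = \gr(\eta) + \gr(\beta)$ matches exactly and $\theta$ is one dimension above $\eta$ while $\beta$ is one dimension above $\alpha$. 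The key structural input is that $\partial^d \theta = (1+J)\omega$ and $\partial^d \omega = \partial\eta = (1+J)\zeta$, which should correspond under $f$ to $\partial(\eta \cdot \beta) = \partial\eta \cdot \beta + \eta\cdot(\alpha + J\alpha)$; this forces the $\eta \cdot \alpha + \eta\cdot J\alpha$ type contributions into $f(\omega)$. I would pin down the precise formula by demanding compatibility with the third case of Definition~\ref{defn:4.2}(1) — where an old cell $x$ with $\eta \in \partial x$ gets $\omega$ appended — and with the second case, where $\theta$ gets appended when $\eta$ appears one step further down.

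**The main obstacle** I anticipate is exactly the interplay of those second and third cases of the modified differential: I must check that the $\theta$- and $\omega$-correction terms introduced into $\partial^d$ are precisely accounted for by $f$, and this requires tracking how $\eta$ propagates through $\partial^2 = 0$ relations in $X$ (the identities like $\partial a = (1+J)r$, $\partial b = r + (1+J)s\,(+\eta)$ extracted in the proof of the preceding lemma). The grading check should be routine: every entry of $f$ either preserves $\gr$ (the $x \mapsto x\cdot\alpha$ part, since $\gr(\alpha) = 0$) or sends $\theta \mapsto \eta\cdot\beta$ with equal $\gr$, or contributes $\eta\cdot\alpha$-type terms to $f(\omega)$ with $\gr(\eta\cdot\alpha) = \gr(\eta) = \gr(\omega)$, so all $U$-powers that appear are nonnegative and in fact many are zero. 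Once $f$ is shown to be a well-defined $J$-equivariant chain map, it is automatically a local map in one direction (it is the obvious ``include the $\alpha$-copy'' map, which is clearly a $U$-localized isomorphism on homology since $X^d(\Delta)$ and $X\otimes X_\Delta$ have the same non-torsion tower). The reverse local map and the verification that the pair exhibits a local equivalence will presumably be handled in a subsequent lemma; for this lemma I only need $f$ itself, with the hard part being the bookkeeping that $f$ commutes with the differentials across all the cases enumerated in Definition~\ref{defn:4.2}.
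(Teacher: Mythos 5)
Your plan to define $f$ cellularly and invoke the lifting principle of \cite[Lemma 3.3]{DM} is the right framework, and your guess $f(\theta) = \eta \cdot \beta$ is correct. However, there is a genuine gap in your proposed formula for $f$ on the old cells: you set $f(x) = x \cdot \alpha$ for $x \in C$ and extend $J$-equivariantly to $JC$, but this naive ``include the $\alpha$-copy'' map is \emph{not} a chain map. To see the failure, take $x \in C$ with $\partial x = a + (1+J)b$. Then $\partial f(x) = (\partial x)\cdot\alpha$ contains the term $(Jb)\cdot\alpha$, whereas $f(\partial^d x)$ contains instead $Jf(b) = (Jb)\cdot(J\alpha)$; the difference $(Jb)\cdot(\alpha + J\alpha) = (Jb)\cdot\partial\beta$ is nonzero. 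The $J$-equivariance requirement forces $f$ to treat the $(1+J)C$-part of $\partial x$ asymmetrically between the two tensor factors $\alpha$ and $J\alpha$, and this asymmetry must be cancelled.

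The paper's actual map carries a correction term: for $x \in C$ with $\partial x = a + (1+J)b\ (+\eta)$, the definition is $f(x) = x\cdot\alpha + (Jb)\cdot\beta$, and similarly $f(\omega) = \eta\cdot\alpha + (J\zeta)\cdot\beta$ where $\partial\eta = (1+J)\zeta$. The extra $(Jb)\cdot\beta$ summand is precisely what produces the compensating $(Jb)\cdot\partial\beta$ when you apply $\partial$, and it also threads through the $\partial^2 = 0$ identities (those $\partial a = (1+J)r$, $\partial b = r + (1+J)s\ (+\eta)$ relations you correctly anticipated) to make the full casework close up. Your hesitation about the formula for $f(\omega)$ (``wait, more carefully\ldots'') is a symptom of the same issue: without the systematic $\beta$-correction depending on the $(1+J)$-component of the boundary, there is no consistent choice. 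So the ``obstacle'' you flagged is real, but it bites already at the level of defining $f$ on $C$, not only in reconciling the $\theta$- and $\omega$-modifications to $\partial^d$. Once the correction term is in place, your outlined verification strategy goes through exactly as the paper carries it out.
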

\begin{proof}
We define our map $f$ on generators $x \in C$, $\omega$, and $\theta$ of $X^d(\Delta)$, and extend by requiring $J$-equivariance.
\begin{enumerate}
\item Let $x \in C$, and suppose $\partial x = a + (1 + J)b \ (+ \eta)$. We define
\[
f(x) = x\cdot \alpha + (Jb) \cdot \beta.
\]
\item We define
\[
f(\omega) = \eta \cdot \alpha + (J\zeta)\cdot \beta.
\]
\item We define
\[
f(\theta) = \eta \cdot \beta.
\]
\end{enumerate}
Let us first verify the monotonicity condition on the gradings. If $x \in C$, we see that the first term appearing in $f(x)$ has the same grading as $x$, since $\gr(x \cdot \alpha) = \gr(x)$. All the cells appearing in $b$ have grading greater than or equal to that of $x$ (since $b$ appears in $\partial x$), and the gradings of the cells appearing in $(Jb) \cdot \beta$ are precisely these gradings shifted down by $2\Delta$. Since $2\Delta \leq \width(X)$, it is easily follows that the gradings of the cells appearing in $(Jb) \cdot \beta$ are still greater than or equal to that of $x$. The other cases are checked similarly.

We now show that $f$ is a chain map. As in the proof that $\partial^d$ is a differential, we have several cases to consider:
\begin{enumerate}
\item Let $x \in C$. We divide into two cases based on whether or not $\partial x$ contains $\eta$:
\begin{enumerate}
\item Let $\partial x = a + (1 + J)b$. As before, we have
\begin{align*}
\partial a &= (1 + J)r \\
\partial b &= r + (1 + J)s \ (+ \eta).
\end{align*}
If $\partial b$ does not contain $\eta$, then we compute 
\begin{align*}
f(\partial^dx) &= f(a + (1 + J)b) \\
&= f(a) + f(b) + Jf(b) \\
&= a \cdot \alpha + (Jr) \cdot \beta + b \cdot \alpha + (Js) \cdot \beta + (Jb) \cdot (J \alpha) + s \cdot \beta \\
&= (a + b) \cdot \alpha + (Jb) \cdot (J\alpha) + (Jr + Js + s) \cdot \beta
\end{align*}
and
\begin{align*}
\partial f(x) &= \partial(x \cdot \alpha + (Jb) \cdot \beta) \\
&= (\partial x) \cdot \alpha + J(\partial b) \cdot \beta + (Jb) \cdot (\partial \beta) \\
&= (a + (1 + J)b) \cdot \alpha + (Jr + (1 + J)s) \cdot \beta + (Jb) \cdot (\alpha + J\alpha) \\
&= (a + b) \cdot \alpha + (Jb) \cdot (J\alpha) + (Jr + Js + s) \cdot \beta,
\end{align*}
as desired. If $\partial b$ does contain $\eta$, then the above computation must be slightly modified, and we have an extra term:
\begin{align*}
f(\partial^dx) &= f(a + (1 + J)b + \theta) \\
&= (a + b) \cdot \alpha + (Jb) \cdot (J\alpha) + (Jr + Js + s) \cdot \beta + f(\theta) \\
&= (a + b) \cdot \alpha + (Jb) \cdot (J\alpha) + (Jr + Js + s) \cdot \beta + \eta \cdot \beta.
\end{align*}
Similarly,
\begin{align*}
\partial f(x) &= \partial(x \cdot \alpha + (Jb) \cdot \beta) \\
&= (a + (1 + J)b) \cdot \alpha + (Jr + (1 + J)s + \eta) \cdot \beta + (Jb) \cdot (\alpha + J\alpha) \\
&= (a + b) \cdot \alpha + (Jb) \cdot (J\alpha) + (Jr + Js + s) \cdot \beta + \eta \cdot \beta,
\end{align*}
as desired.
\item Let $\partial x = a + (1 + J)b + \eta$. We again write $\partial a = p + (1 + J)q$ and $\partial b = r + (1 + J)s$, noting that $\partial a$ and $\partial b$ cannot contain $\eta$ due to the dimensional grading. Imposing the condition that $\partial^2x = 0$ yields $p = 0$ and $q + r + \zeta = 0$. We thus compute:
\begin{align*}
f(\partial^dx) &= f(a + (1 + J)b + \omega) \\
&= f(a) + f(b) + Jf(b) + f(\omega) \\
&= a \cdot \alpha + (Jq) \cdot \beta + b \cdot \alpha + (Js) \cdot \beta + (Jb) \cdot (J \alpha) + s \cdot \beta + \\
& \ \ \ \ \eta \cdot \alpha + (J\zeta) \cdot \beta \\
&= (a + b + \eta) \cdot \alpha + (Jb) \cdot (J\alpha) + (Jq + J\zeta + Js + s) \cdot \beta
\end{align*}
and
\begin{align*}
\partial f(x) &= \partial(x \cdot \alpha + (Jb) \cdot \beta) \\
&= (a + (1 + J)b + \eta) \cdot \alpha + (Jr + (1 + J)s) \cdot \beta + (Jb) \cdot (\alpha + J\alpha) \\
&= (a + b + \eta) \cdot \alpha + (Jb) \cdot (J\alpha) + (Jr + Js + s) \cdot \beta,
\end{align*}
with equality following from the fact that $Jq + J\zeta = Jr$, since $q + r + \zeta = 0$.
\end{enumerate}
\item The proof that $f$ is a chain map on $\omega$ is handled similarly. Let $\partial \zeta = r + (1 + J)s$. Imposing the condition that $\partial^2 \eta = 0$, we see that $r = 0$. Hence we compute
\begin{align*}
f(\partial^d\omega) &= f((1 + J)\zeta) \\
&= \zeta \cdot \alpha + (Js) \cdot \beta + (J\zeta) \cdot (J\alpha) + s \cdot \beta
\end{align*}
and
\begin{align*}
\partial f(\omega) &= \partial(\eta \cdot \alpha + (J\zeta) \cdot \beta) \\
&= (\zeta + J\zeta) \cdot \alpha + (1+J)s \cdot \beta + (J\zeta) \cdot (\alpha + J \alpha) \\
&= \zeta \cdot \alpha + s \cdot \beta+ (Js) \cdot \beta + (J\zeta) \cdot (J\alpha),
\end{align*}
as desired.
\item Finally we verify that $f$ is a chain map on $\theta$. We have
\[
f(\partial^d\theta) = f((1 + J)\omega) = \eta \cdot \alpha + (J\zeta) \cdot \beta + \eta \cdot (J\alpha) + \zeta \cdot \beta
\]
and
\[
\partial f(\theta) = \partial(\eta \cdot \beta) = (\zeta + J\zeta) \cdot \beta + \eta \cdot (\alpha + J\alpha),
\]
which are evidently equal.
\end{enumerate}
This completes the verification that $f$ is a chain map.
\end{proof}

We now construct a chain map $g$ from $X \otimes X_{\Delta}$ to $X^d(\Delta)$. It may be useful to consult Example~\ref{examp:4.4} and Figure~\ref{fig:13}.
\begin{lem} \label{lem:4.3}
We construct a chain map $g$ from $X \otimes X_{\Delta}$ to $X^d(\Delta)$.
\end{lem}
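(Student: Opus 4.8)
\emph{Proposed proof.} The plan is to mirror the construction of $f$ in Lemma~\ref{lem:4.1}: I would first build a cellular chain map on skeleta from the skeleton of $X\otimes X_{\Delta}$ to that of $X^d(\Delta)$, then check the grading inequality $\gr(y)\ge\gr(\xi)$ for every cell $y$ occurring in the image of a cell $\xi$, and finally lift it to a grading-preserving $\ff[U]$-map via the principle of \cite[Lemma 3.3]{DM}. Writing $\alpha,J\alpha,\beta$ for the generators of $X_{\Delta}$ (with $\partial\beta=\alpha+J\alpha$ on skeleta), I would define $g$ on the generators $x\cdot\alpha$ and $x\cdot\beta$ for $x\in C$, together with $\eta\cdot\alpha$ and $\eta\cdot\beta$, and extend by $J$-equivariance. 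The guiding idea is that $g$ should collapse the $X_{\Delta}$-direction: away from $\eta$ one sets $g(x\cdot\alpha)=g(x\cdot J\alpha)=x$ and $g(x\cdot\beta)=0$, while $\eta\cdot\alpha\mapsto\omega$, $\eta\cdot J\alpha\mapsto J\omega$, and $\eta\cdot\beta\mapsto\theta$ record the newly doubled region. Since every map in sight is strictly $J$-equivariant, no separate homotopy-equivariance check is required here.

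As with $\partial^d$ and with $f$, this naive recipe is not yet a chain map and must be corrected near $\eta$, the corrections being governed by exactly the casework of Definition~\ref{defn:4.2}. Precisely, when $\eta$ occurs in $\partial b$ for some $b$ appearing in $\partial x$ (the second case), or when $\eta$ occurs in $\partial x$ itself (the third case), one must add terms built from $\theta$ (and, in the $\eta$-slots, possibly from $\omega$ and the $(1+J)$-image of $\zeta$) so that $g$ intertwines $\partial$ with $\partial^d$. With the right formula in hand, two verifications remain. First, grading monotonicity: for the main terms this is automatic, and the only delicate point is the corrections involving $\theta$, where $\gr(\theta)=\gr(\eta)-2\Delta$ and the hypothesis $2\Delta\le\width(X)$ is precisely what forces $\gr(\theta)\ge\gr(x)$ for the relevant $x$ --- the same computation used in Section~\ref{sec:4.1} to see that $X^d(\Delta)$ is a well-defined geometric complex. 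Second, one checks $g\circ\partial=\partial^d\circ g$ on each generator; this is the bulk of the argument and proceeds by the same lengthy casework as the proof that $\partial^d$ squares to zero and the proof of Lemma~\ref{lem:4.1}, splitting on whether $\partial x$, $\partial a$, $\partial b$, $\partial\zeta$ contain $\eta$ and using $\partial^2=0$ in $X$ to produce the needed identities among the cross-terms (relations such as $p=0$, $q=r$, and $q+r+\zeta=0$, exactly as in Lemma~\ref{lem:4.1}).

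The main obstacle is identifying the correction terms uniformly. The naive ``fold'' map already commutes with $\partial^d$ when $\eta$ is the unique top-dimensional cell of $X$ (as for $X=X_i$), but it fails for $X=X_i^{*}$: there $\eta$ is a bottom cell, every cell of $C$ has $\eta$ in its boundary, and the naive recipe does not even commute with $\partial^d$ --- one genuinely needs a $\theta$-term in $g(x\cdot J\alpha)$ (which, incidentally, also prevents $g$ from killing the non-torsion generator of $X\otimes X_{\Delta}$, as the locality of $g$ will later require). Pinning these terms down as a function of the casework of Definition~\ref{defn:4.2}, and then carrying them faithfully through the voluminous but routine chain-map verification, is where the care is needed; everything else parallels what was already done for $f$.
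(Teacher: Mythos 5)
Your proposal takes essentially the same approach as the paper: define $g$ as the collapse $x\cdot\alpha\mapsto x$, $x\cdot\beta\mapsto 0$, $\eta\cdot\alpha\mapsto\omega$, $\eta\cdot\beta\mapsto\theta$, extend by $J$-equivariance, and correct near $\eta$. In fact the only correction that ends up being needed is exactly the one you single out, namely $g(x\cdot(J\alpha))=x+\theta$ when $\partial x$ contains $\eta$; the second case of Definition~\ref{defn:4.2} (where $\eta$ lies in $\partial b$ for some $b$ in $\partial x$) is handled automatically by $J$-equivariance, since $g(Jb\cdot\alpha)=Jg(b\cdot(J\alpha))=Jb+\theta$, and no extra $\zeta$- or $\omega$-terms are required in the $\eta$-slots beyond the base assignments. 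The grading check via $\gr(\theta)=\gr(\eta)-2\Delta$ together with $2\Delta\le\width(X)$, and the chain-map casework using $p=0$, $q=r$, $q+r+\zeta=0$, then go through exactly as you describe.
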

\begin{proof}
As usual, we define $g$ piecewise on different generators of $X \otimes X_{\Delta}$.
\begin{enumerate}
\item We first define $g$ on $C \otimes \alpha$. Let $x \in C$. We define
\[
g(x \cdot \alpha) = x,
\]
where on the right-hand side we mean the corresponding element $x \in (C \oplus \omega)$ in $X^d(\Delta)$. By imposing $J$-equivariance, this defines $g$ on $(JC) \otimes (J\alpha)$. Defining $g$ on $C \otimes (J\alpha)$ is similar, but with an extra special case:
\[
g(x \cdot (J\alpha)) = 
\begin{cases} 
      x, \text{ if } \partial x \text{ does not contain } \eta \\
      x + \theta, \text{ if } \partial x \text{ contains } \eta.
\end{cases}
\]
By imposing $J$-equivariance, this also defines $g$ on $(JC) \otimes \alpha$. 
\item We define 
\[
g(\eta \cdot \alpha) = \omega.
\]
Note that imposing $J$-equivariance forces $g(\eta \cdot (J\alpha)) = J\omega$.
\item We define $g$ on $C \otimes \beta$ to be zero. By $J$-equivariance, this defines $g$ on $(JC) \otimes \beta$ to also be zero. We set $g(\eta \cdot \beta) = \theta$.
\end{enumerate}
As usual, we must verify that $g$ satisfies the usual property involving the gradings. In all cases except when we send $x \cdot (J\alpha)$ to $x + \theta$, it is evident that $g$ as written preserves $\gr$; whereas in the exceptional case the claim follows from the fact that $\gr(\theta) = \gr(\eta) - 2\Delta$ and $2\Delta \leq \width(X)$.

We now verify that $g$ is a chain map. As usual, we proceed using copious amounts of casework:
\begin{enumerate}
\item We first check that $g$ is a chain map on $C \otimes \alpha$ and $C \otimes (J\alpha)$. 
\begin{enumerate}
\item First suppose that $\partial x$ does not contain $\eta$, so that $\partial x = a + (1 + J)b$. If $\partial b$ does not contain $\eta$, we have
\begin{align*}
g(\partial (x\cdot \alpha)) &= g((a + (1+J)b)\cdot \alpha) \\
&= g(a \cdot \alpha) + g(b \cdot \alpha) + Jg(b \cdot (J\alpha)) \\
&= a + (1 + J)b
\end{align*}
while
\[
\partial^dg(x \cdot \alpha) = \partial^dx = a + (1 + J)b,
\]
as desired. A similar computation holds for $x \cdot (J\alpha)$. (Note that here we have used the fact that $\partial a$ cannot contain $\eta$, since $\partial^2x = 0$.) Now suppose that $\partial b$ does contain $\eta$. Then $g(b \cdot (J\alpha)) = b + \theta$, rather than just $Jb$. Hence in this case
\[
g(\partial (x\cdot \alpha)) = g((a + (1+J)b)\cdot \alpha) = a + (1 + J)b + \theta,
\]
while
\[
\partial^dg(x \cdot \alpha) = \partial^dx = a + (1 + J)b + \theta,
\]
as desired. Again, a similar computation holds for $x \cdot (J\alpha)$.
\item Now suppose $\partial x = a + (1 + J)b + \eta$. Then we compute
\[
g(\partial (x\cdot \alpha)) = g((a + (1+J)b + \eta)\cdot \alpha) = a + (1 + J)b + \omega,
\]
while
\[
\partial^dg(x \cdot \alpha) = \partial^dx = a + (1 + J)b + \omega,
\]
as desired. Here, we have used the fact that $a$ and $b$ cannot have $\eta$ in their differential, due to the dimensional grading. Similarly,
\[
g(\partial (x\cdot (J\alpha))) = g((a + (1+J)b + \eta)\cdot (J\alpha)) = a + (1 + J)b + J\omega,
\]
while
\begin{align*}
\partial^dg(x \cdot (J\alpha)) &= \partial^d(x + \theta) \\
&= a + (1 + J)b + \omega + (\omega + J\omega) \\
&= a + (1 + J)b + J\omega,
\end{align*}
as desired.
\end{enumerate}
\item We check
\[
g(\partial (\eta\cdot \alpha)) = g((1+J)\zeta\cdot \alpha) = (1 + J)\zeta,
\]
while
\[
\partial^dg(\eta \cdot \alpha) = \partial^d\omega = (1 + J)\zeta,
\]
as desired. 
\item For any $x \in C$, we have that $\partial^dg(x \cdot \beta) = 0$. On the other hand,
\[
g(\partial (x\cdot \beta)) = g((\partial x) \cdot \beta + x \cdot (\alpha + J\alpha)).
\]
If $\partial x$ does not contain $\eta$, then $g((\partial x) \cdot \beta) = 0$ and $g(x \cdot \alpha) = g(x \cdot (J\alpha)) = x$, so this is zero. If $\partial x$ does contain $\eta$, then $g((\partial x) \cdot \beta) = \theta$, while $g(x \cdot \alpha) = x$ and $g(x \cdot (J\alpha)) = x + \theta$, so this is again zero. Finally, we easily verify that 
\[
g(\partial(\eta \cdot \beta)) = g((\zeta +J\zeta) \cdot \beta + \eta \cdot (\alpha + J\alpha)) = \omega + J\omega,
\]
while
\[
\partial^dg(\eta \cdot \beta) = \partial^d(\theta) = (1 + J)\omega,
\]
as desired.
\end{enumerate}
This completes the verification that $g$ is a chain map.
\end{proof}

Now suppose that $X$ is a split $\inv$-complex, so that $U^{-1}H_*(X) = \ff[U, U^{-1}]$. Technically, we have not yet shown that the doubled complex $X^d(\Delta)$ is also an $\inv$-complex, in the sense that we have not formally verified $U^{-1}H_*(X^d(\Delta)) = \ff[U, U^{-1}]$. This property is clear in the geometric picture, since doubling a complex does not change the underlying homotopy type of the skeleton. (Hence doubling a cell complex with trivial homology should give a cell complex which still has trivial homology.) We will provide a more concrete proof in the next section when we explicitly describe the homology of $X^d(\Delta)$. For now, let us state:

\begin{thm}\label{thm:4.3}
Let $X$ be a split $\inv$-complex, and let $\Delta$ be a non-negative even integer such that $2\Delta \leq \width(X)$. Then $X^d(\Delta)$ is locally equivalent to $X \otimes X_{\Delta}$.
\end{thm}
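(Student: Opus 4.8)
The plan is to deduce the theorem directly from the two chain maps already constructed in Lemmas~\ref{lem:4.1} and~\ref{lem:4.3}, checking by hand the conditions in the definition of local equivalence. Recall these require grading-preserving chain maps in both directions, commuting with the involutions up to homotopy, and inducing isomorphisms on homology after inverting $U$. The maps $f\colon X^d(\Delta)\to X\otimes X_\Delta$ and $g\colon X\otimes X_\Delta\to X^d(\Delta)$ are already known to be grading-preserving chain maps. The homotopy-commutation with the involutions is in fact automatic and strict: both $f$ and $g$ were defined on the chosen splitting and then extended $J$-equivariantly, and since the unique $J$-fixed cell of each complex is sent to a genuinely $J$-fixed element (for example $f(\theta)=\eta\cdot\beta$ and $g(\eta\cdot\beta)=\theta$), this extension is consistent and $f$, $g$ commute with $J$ and $J\otimes J$ on the nose.

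The main step is to verify the identity $g\circ f=\id_{X^d(\Delta)}$ by a short computation on generators. For $x\in C$ with $\partial x=a+(1+J)b\ (+\eta)$ we have $f(x)=x\cdot\alpha+(Jb)\cdot\beta$; since $g(x\cdot\alpha)=x$ and $g$ vanishes on $(JC)\otimes\beta$, this gives $g(f(x))=x$. Likewise $g(f(\omega))=g(\eta\cdot\alpha)+g((J\zeta)\cdot\beta)=\omega$ and $g(f(\theta))=g(\eta\cdot\beta)=\theta$, and the behaviour on $JC$ and $J\omega$ follows by $J$-equivariance. Consequently $f$ is injective and $g$ surjective on homology at every level, in particular after inverting $U$.

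To promote these to isomorphisms on $U^{-1}H_*$, I would first record that $X^d(\Delta)$ is itself an $\inv$-complex, i.e.\ $U^{-1}H_*(X^d(\Delta))\cong\ff[U,U^{-1}]$; this is the one ingredient not already supplied by Lemmas~\ref{lem:4.1}--\ref{lem:4.3}, and it may be taken from the explicit description of $H_*(X^d(\Delta))$ given in Section~\ref{sec:5} (alternatively, rescaling each generator $e_i$ of a geometric complex by $U^{\gr(e_i)/2}$ identifies $U^{-1}H_*$ with $H_*(\mathrm{skeleton})\otimes\ff[U,U^{-1}]$, and the doubling operation alters the skeleton only by an elementary expansion at $\eta$, hence preserves its chain homotopy type). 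Granting this, $X\otimes X_\Delta$ is an $\inv$-complex as a tensor product of such, so both $U^{-1}H_*(X^d(\Delta))$ and $U^{-1}H_*(X\otimes X_\Delta)$ are free of rank one; an injective grading-preserving map between two such modules sends the generator to a nonzero homogeneous element of the same degree, necessarily a unit multiple of the generator, and so is an isomorphism. Thus $f_*$ is an isomorphism on $U^{-1}H_*$ and hence $g_*=f_*^{-1}$ as well, exhibiting $f$ and $g$ as local maps in both directions and establishing the local equivalence. I expect this last point---confirming that doubling preserves the $\inv$-complex condition on homology---to be the only genuinely nontrivial obstacle; everything else is formal bookkeeping once the maps of Lemmas~\ref{lem:4.1} and~\ref{lem:4.3} are in hand.
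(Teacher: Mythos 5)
Your proposal is correct and follows essentially the same route as the paper: verify $g \circ f = \id_{X^d(\Delta)}$ on generators using the maps from Lemmas~\ref{lem:4.1} and~\ref{lem:4.3}, invoke the fact that $U^{-1}H_*(X^d(\Delta)) \cong \ff[U,U^{-1}]$ (deferred to the homology computation in Section~\ref{sec:5}), and conclude that $f$ and $g$ are local maps. The extra explicit remarks you include---that $f$ and $g$ are strictly $J$-equivariant by construction, and that a grading-preserving injection between free rank-one $\ff[U,U^{-1}]$-modules must be an isomorphism---are correct and merely make explicit what the paper leaves to the reader.
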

\begin{proof}
Observe that $g(f(\omega)) = \omega$, $g(f(\theta)) = \theta$, and $g(f(x)) = x$ for any $x \in C$. Hence the composition 
\[
g \circ f: X^d(\Delta) \rightarrow X \otimes X_{\Delta}
\]
is the identity. Since $X$ and $X_{\Delta}$ are both $\inv$-complexes, their tensor product is an $\inv$-complex, and we have $U^{-1}H_*(X \otimes X_{\Delta}) = \ff[U, U^{-1}]$. Modulo the fact that $U^{-1}H_*(X^d(\Delta)) = \ff[U, U^{-1}]$, it easily follows that $f$ and $g$ must be isomorphisms between $U^{-1}H_*(X^d(\Delta))$ and $U^{-1}H_*(X \otimes X_{\Delta})$.
\end{proof}

Note that although \textit{a priori} the definition of $X^d(\Delta)$ depends on a choice of splitting $C$ for $X$, Theorem~\ref{thm:4.3} does not depend on this auxiliary data.

\begin{examp} \label{examp:4.4}
It is helpful to work through a concrete example of the maps $f$ and $g$. In Figure~\ref{fig:13}, we have displayed a sample split complex $X$, together with its double $X^d(\Delta)$ and the skeleton of the tensor product $X \otimes X_{\Delta}$. We have marked the $J$-invariant cell $\eta$ in purple, and chosen a splitting $C$ given by the span of the red cells. The corresponding cells of $C$ in $X^d(\Delta)$ are also marked in red, while we have marked $\omega$ in purple and $\theta$ in blue. The reader who wishes to gain intuition for the construction of the maps $f$ and $g$ should stare at Figure~\ref{fig:13} and attempt to construct $J$-equivariant chain maps between $X^d(\Delta)$ and $X \otimes X_{\Delta}$ by hand.

In the second row, we have displayed the action of $f$. Here, the red 0-cell and red horizontal 1-cell are sent to their obvious counterparts in $X \otimes X_{\Delta}$. However, the semicircular red 1-cell - whose boundary contains a nonzero component lying in $(1+J)C$ - is sent to the sum of its corresponding 1-cell in $X \otimes X_{\Delta}$, plus an additional red 1-cell running down the length of the cylinder. Similarly, the red 2-cell is sent to the sum of the two red 2-cells indicated on the right. The purple and blue cells are mapped to each other in the obvious way. The reader should verify that extending via $J$-equivariance yields a cellular chain map.

In the third row, we have displayed the action of $g$. Here, the red, green, purple, and blue cells are all mapped to each other in the obvious way, with the exception that the green 1-cell is mapped to the sum of the green and blue 1-cells on the right. All cells (other than the blue 1-cell) which run along the length of the cylinder are sent to zero. Again, the reader should verify that extending via $J$-equivariance yields a cellular chain map.
\begin{figure}[h!]
\center
\includegraphics[scale=0.8]{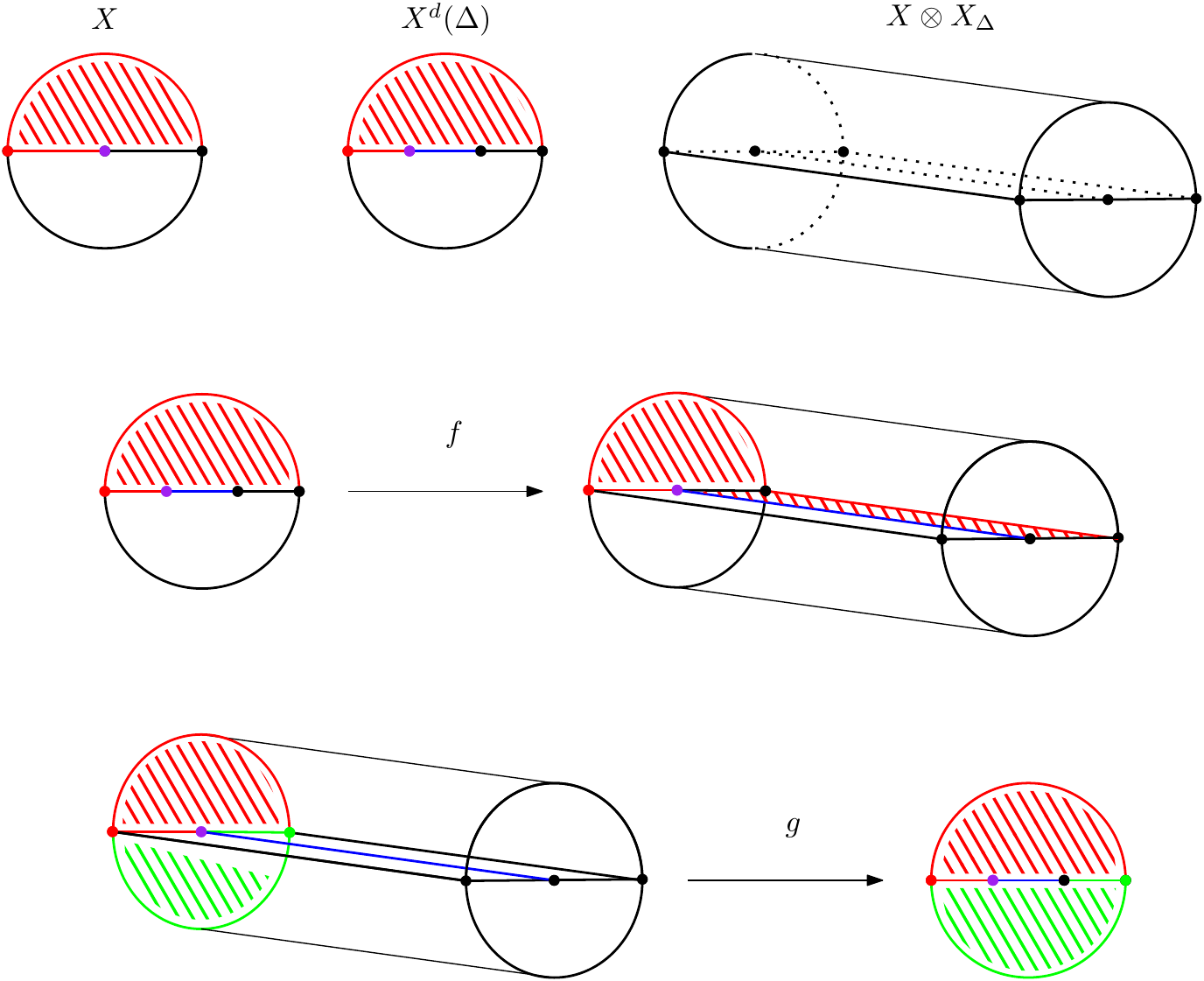}
\caption{Local maps between $X^d(\Delta)$ and $X \otimes X_{\Delta}$.}\label{fig:13}
\end{figure}
\end{examp}


\section{Connected Homology} \label{sec:5}
At this point, it is clear that we can produce a representative complex for any linear combination of the $X_i$ by inductively using Theorem~\ref{thm:4.3} and dualizing whenever needed. More precisely, consider a linear combination
\[
X = \pm X_{i_1} \pm X_{i_2} \pm X_{i_3} \pm \cdots \pm X_{i_n}
\]
with $i_1 \geq i_2 \geq \cdots \geq i_n$. We start with $\pm X_{i_1}$, which is a split complex with width $i_1$. At each step $k$, we either add or subtract $X_{i_k}$. In the former case, we use Theorem~\ref{thm:4.3} (as usual) to obtain a new split complex with width $i_k$. In the latter, we first dualize our original complex to produce a split complex with the same width, and then add $X_{i_k}$ to the dualized complex and dualize again. This similarly yields a new split complex with width $i_k$, and we proceed by induction. (Alternatively, we could explicitly write down the halving operation and show that it is locally equivalent to subtracting $X_{i_k}$.) Note that the representative complex we obtain at the end of this process has $2n + 1$ generators, rather than the $3^n$ generators obtained by naively taking the full tensor product.

We now investigate the homology of the doubled complex. 
\begin{lem} \label{lem:5.1}
Let $X$ be a split complex whose single $J$-invariant cell has Maslov grading $g$. Let $2\Delta \leq \width(X)$, and let $X^d(\Delta)$ be the double of $X$ with parameter $\Delta$. Then 
\[
H_*(X^d(\Delta)) = H_*(X) \oplus \mathcal{T}_g(\Delta).
\]
That is, the homology of $X^d(\Delta)$ is isomorphic to the direct sum of the homology of $X$, plus a single $U$-torsion tower of length $\Delta$ which starts in grading $g$.
\end{lem}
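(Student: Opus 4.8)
\emph{Strategy.} The plan is to realize $X^d(\Delta)$ as the middle term of a short exact sequence of $\ff[U]$-complexes $0\to K\to X^d(\Delta)\xrightarrow{\pi}X\to 0$ that \emph{splits at the chain level}, where $K$ is the two-generator subcomplex on $\theta$ and $\omega+J\omega$ whose homology is exactly the asserted torsion tower. First I would define the collapsing map $\pi\colon X^d(\Delta)\to X$ on the skeleton by $x\mapsto x$ for $x\in C\oplus JC$, by $\omega\mapsto\eta$ and $J\omega\mapsto\eta$, and by $\theta\mapsto 0$. Since $\gr(\omega)=\gr(J\omega)=\gr(\eta)$ and $\theta$ is killed, this is grading-preserving, and running through the cases of Definition~\ref{defn:4.2} (the $\theta$-terms are annihilated and the $\omega$-terms go to $\eta$), together with $\partial^d\omega=(1+J)\zeta=\partial\eta$ and $\partial^d\theta=(1+J)\omega\mapsto(1+J)\eta=0$, shows that $\pi$ is a chain map; it is obviously surjective.

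\emph{The kernel.} Next I would identify $K:=\ker\pi$ as the $\ff[U]$-subcomplex generated by $\theta$ and $\omega+J\omega$. By Definition~\ref{defn:4.3} these carry Maslov gradings $g-2\Delta+1$ and $g$, with $\partial^d\theta=U^{\Delta}(\omega+J\omega)$ and $\partial^d(\omega+J\omega)=0$. Hence $H_*(K)\cong\ff[U]/U^{\Delta}$, generated in grading $g$; that is, $H_*(K)=\mathcal{T}_g(\Delta)$.

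\emph{The section and the conclusion.} The key remaining point is to produce a chain map $s\colon X\to X^d(\Delta)$ with $\pi\circ s=\id_X$. Such an $s$ is already at hand: take it to be the composite $X\to X\otimes X_{\Delta}\to X^d(\Delta)$ of the chain map $x\mapsto x\cdot\alpha$ with the map $g$ of Lemma~\ref{lem:4.3}; unwinding the definitions one checks $\pi\circ s=\id_X$ on $\ff[U]$-generators. (Explicitly, $s$ is the identity on $C\oplus JC$ except that it adds a $\theta$-correction on the $J$-translates of the cells $x$ with $\eta\in\partial x$, and $s(\eta)=\omega$; this correction is forced because $\eta$ is $J$-invariant while $\omega$ is not, and it respects the grading precisely because $2\Delta\le\width(X)$.) Given $s$, the long exact sequence of $0\to K\to X^d(\Delta)\xrightarrow{\pi}X\to0$ shows $\pi_*$ is onto, the connecting homomorphism vanishes, and the resulting short exact sequence of $\ff[U]$-modules $0\to\mathcal{T}_g(\Delta)\to H_*(X^d(\Delta))\to H_*(X)\to 0$ is split by $s_*$. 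Therefore $H_*(X^d(\Delta))\cong H_*(X)\oplus\mathcal{T}_g(\Delta)$, and inverting $U$ then gives $U^{-1}H_*(X^d(\Delta))\cong\ff[U,U^{-1}]$, which is the point left unverified in the proof of Theorem~\ref{thm:4.3}.

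\emph{Main obstacle.} The one genuinely non-formal step is the construction of the section $s$: because $\operatorname{Ext}^1_{\ff[U]}$ between finite torsion modules is in general nonzero, the vanishing of the connecting homomorphism by itself would \emph{not} yield the direct-sum decomposition, so a chain-level splitting is essential rather than a convenience. Happily the maps $f$ and $g$ of Section~\ref{sec:4.2} already encode one; the only care required is in bookkeeping the $\theta$-corrections on the image of $JC$ and in confirming that the inequality $2\Delta\le\width(X)$ is invoked exactly where a $\theta$-term is introduced. Everything else—that $\pi$ is a chain map and that $K$ has the stated homology—is routine casework of the kind already carried out in Section~\ref{sec:4.1}.
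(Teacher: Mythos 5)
Your proof is correct and takes a genuinely different route to the final splitting, though it begins from the same place as the paper's. Both you and the paper consider the same two-generator $\ff[U]$-subcomplex $K$ (the paper's $X'$) spanned by $\omega+J\omega$ and $\theta$, and both observe that the quotient is isomorphic to $X$ via the collapsing map $\pi$. The difference is in how the resulting short exact sequence is shown to split on homology. The paper works entirely in the long exact sequence of the pair, first showing the connecting maps vanish by proving $i_*\colon H_*(X')\to H_*(X^d(\Delta))$ is injective (any image of $\partial^d$ carries a $U$-power of at least $\Delta$, so $U^i(\omega+J\omega)$ with $i<\Delta$ cannot bound), and then separately arguing that the resulting short exact sequence of $\ff[U]$-modules splits because $[\omega+J\omega]$ cannot lie in $U\cdot H_*(X^d(\Delta))$ — again a width argument. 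You instead exhibit a chain-level section $s=g\circ(\,\cdot\,\alpha)$ of $\pi$ built from the already-verified map of Lemma~\ref{lem:4.3}, which yields $X^d(\Delta)\cong K\oplus X$ as $\ff[U]$-chain complexes and hence the homology splitting immediately, with no separate injectivity or $\operatorname{Ext}$-vanishing argument needed. I checked that your $\pi$ is grading-preserving and is a chain map (running through Definition~\ref{defn:4.2}: the $\theta$-terms are killed and the $\omega$-terms go to $\eta$), that $\pi\circ s=\id_X$ on generators ($s(x)=x$ on $C$, $s(Jx)=Jx$ or $Jx+\theta$ on $JC$, $s(\eta)=\omega$, and the $\theta$-correction dies under $\pi$), and that $K$ has the asserted torsion homology. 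Your remark that vanishing of the connecting map alone would not give a direct-sum decomposition over $\ff[U]$ is correct and is exactly why the paper needs its second width argument; your chain-level section sidesteps this in one stroke. The tradeoff is that your argument leans on the full strength of Lemma~\ref{lem:4.3}, whereas the paper's is self-contained at the homology level (at the cost of two width estimates). One small point of emphasis: the width hypothesis $2\Delta\le\width(X)$ enters your proof only through Lemma~\ref{lem:4.3}'s grading verification (the $\theta$-correction step), so it is not explicitly visible in your argument; it would be worth flagging that dependence so the reader sees where the hypothesis is consumed.
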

\begin{proof}
Let $X'$ be the subcomplex of $X^d(\Delta)$ spanned over $\ff[U]$ by the two generators $\omega + J\omega$ and $\theta$. Since $\omega + J\omega$ has Maslov grading $g$ and $\partial^d \theta = U^{\Delta}(\omega + J\omega)$, it is clear that the homology of $X'$ is the $U$-torsion tower $\mathcal{T}_g(\Delta)$. Now observe that the quotient complex $X^d(\Delta)/X'$ is isomorphic to $X$ via the map which sends $\omega$ and $J\omega$ (which are identified in the quotient complex) to $\eta$, $\theta$ (which is zero in the quotient complex) to zero, and is the obvious correspondence on all the other generators. We thus have a long exact sequence of $U$-equivariant maps:
\[
\cdots \rightarrow H_*(X') \rightarrow H_*(X^d(\Delta)) \rightarrow H_*(X^d(\Delta)/X') = H_*(X) \rightarrow \cdots.
\]
We begin by showing that the boundary maps in this sequence are zero. To see this, it suffices to prove that the induced inclusion map $i_*: H_*(X') \rightarrow H_*(X^d(\Delta))$ on the level of homology is injective. The non-zero elements of $H_*(X')$ are given by the classes $[U^i(\omega + J\omega)]$ with $0 \leq i < \Delta$. Viewed as elements of $X^d(\Delta)$, the cycles $U^i(\omega + J\omega)$ are still non-zero in homology, since any element in the image of the boundary map $\partial^d$ on $X^d(\Delta)$ must appear with a $U$-power of at least $\Delta$. This shows that we have a short exact sequence of $U$-equivariant maps.

Proving that the sequence splits is also straightforward. We know that the inclusion of $X'$ into $X^d(\Delta)$ induces a $\ff[U]$-equivariant isomorphism of $H_*(X')$ onto its image in $H_*(X^d(\Delta))$. To show that the sequence splits, it suffices to show that this image is in fact a direct summand of $H_*(X^d(\Delta))$. This will be true if the class $[\omega + J\omega]$ in $H_*(X^d(\Delta))$ is not in the image of multiplication by $U$. (In such a situation, it is easy to see that we can extend $[\omega + J\omega]$ to an $\ff[U]$-basis for all of $H_*(X^d(\Delta))$.) But the only way for this to fail would be for $\omega + J\omega$ to be homologous to a $U$-power of some other cycle; i.e.,
\[
\partial x = (\omega + J\omega) + Uy
\]
for some cycle $y$ and bordism element $x$. Because the width of $X^d(\Delta)$ is $\Delta$, this is impossible unless $\Delta = 0$, in which case we have $H_*(X^d(\Delta)) = H_*(X)$ and the theorem is trivially verified.
\end{proof}

Note that Lemma~\ref{lem:5.1} shows that if $U^{-1}H_*(X) = \ff[U, U^{-1}]$, then $U^{-1}H_*(X^d(\Delta)) = \ff[U, U^{-1}]$, completing the proof of Theorem~\ref{thm:4.3}.

Lemma~\ref{lem:5.1} easily implies the following precursor to Theorem~\ref{thm:1.1}.  Below, we still allow cancelling pairs of basis elements (i.e., $X_i - X_i$) to appear in our linear combinations.
\begin{lem}\label{lem:5.2}
Consider a linear combination
\[
X = \pm X_{i_1} \pm X_{i_2} \pm X_{i_3} \pm \cdots \pm X_{i_n}
\]
with $i_1 \geq i_2 \geq \cdots \geq i_n$. Let $\Sigma$ be the representative complex for $X$ obtained by inductively applying Theorem~\ref{thm:4.3} (dualizing when necessary). Then the homology of $\Sigma$ has torsion part given by the algorithm of Theorem~\ref{thm:1.1} (allowing for cancelling pairs $X_i - X_i$). 
\end{lem}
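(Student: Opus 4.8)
The plan is to induct on $n$, following the inductive construction of $\Sigma = \Sigma_n$ one term at a time: set $\Sigma_1 = \pm X_{i_1}$ and, once $\Sigma_{k-1}$ has been built for $\pm X_{i_1} \pm \cdots \pm X_{i_{k-1}}$, let $\Sigma_k = \Sigma_{k-1}^d(i_k)$ when the sign of $X_{i_k}$ is positive, and let $\Sigma_k$ be the result of dualizing $\Sigma_{k-1}$, doubling with parameter $i_k$, and dualizing again when it is negative. First I would record that each $\Sigma_k$ is a split complex of width $2i_k$ (by the remark following Definition~\ref{defn:4.3}, together with the fact that dualization preserves width and the number of generators), so that, since $i_1 \geq i_2 \geq \cdots$, every invocation of Lemma~\ref{lem:5.1} below is legitimate; in particular each $\Sigma_k$ is still an $\inv$-complex, so $H_*(\Sigma_k)$ is a single non-torsion $\ff[U]$-tower plus torsion. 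The inductive hypothesis I would carry has two parts: (a) the $U$-torsion part of $H_*(\Sigma_k)$ is the $\ff[U]$-module produced by the algorithm of Theorem~\ref{thm:1.1} applied to $\pm X_{i_1} \pm \cdots \pm X_{i_k}$, with each tower labelled upwards or downwards according to the sign of the corresponding $X_i$; and (b) the Maslov grading $g_k$ of the unique $J$-invariant cell of $\Sigma_k$ equals the grading of the tail of the $k$-th tower when $X_{i_k}$ is negative, and one less than that grading when $X_{i_k}$ is positive. Item (b) is the auxiliary bookkeeping that makes the induction close, since the algorithm places the head of the $k$-th tower in terms of the tail of the $(k-1)$-th.

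For the base case $\Sigma_1 = \pm X_{i_1}$ I would compute $H_*$ directly. By Example~\ref{examp:2.9} the torsion part of $H_*(X_{i_1})$ is $\mathcal{T}_0(i_1)$, a downwards tower with head in grading $0$ and tail in grading $-2i_1 + 2$, whose $J$-invariant generator $\beta$ lies in grading $-2i_1 + 1$; dually $H_*(X_{i_1}^*)$ has torsion part $\mathcal{T}_{2i_1 - 1}(i_1)$, an upwards tower with head in grading $1$ and tail in grading $2i_1 - 1$, whose $J$-invariant generator $\beta^*$ lies in grading $2i_1 - 1$. Both cases match the first step of the algorithm and verify (a) and (b).

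For the inductive step with $X_{i_k}$ positive, $\Sigma_k = \Sigma_{k-1}^d(i_k)$ and Lemma~\ref{lem:5.1} gives $H_*(\Sigma_k) = H_*(\Sigma_{k-1}) \oplus \mathcal{T}_{g_{k-1}}(i_k)$: a new downwards tower of length $i_k$ whose head lies in grading $g_{k-1}$. Using (b) for $\Sigma_{k-1}$, I would check that $g_{k-1}$ is precisely the head grading demanded by criterion~(1) (if $X_{i_{k-1}}$ has the opposite sign) or criterion~(2) (if $X_{i_{k-1}}$ has the same sign) in the construction of $\mathcal{H}$, and that the new $J$-invariant cell $\theta$ has Maslov grading $g_k = M(\theta) = M(\eta) - 2i_k + 1 = g_{k-1} - 2i_k + 1$ by Definition~\ref{defn:4.3} (here $\eta$ is the $J$-invariant cell of $\Sigma_{k-1}$), one below the new tail, confirming (b). For the inductive step with $X_{i_k}$ negative I would decompose the operation into its three pieces. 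Dualizing $\Sigma_{k-1}$ reverses the orientation of every tower, sends each tower $\mathcal{T}_c(l)$ to $\mathcal{T}_{-c + 2l - 1}(l)$, and negates the $J$-invariant grading; this follows from the definition of the dual complex and can be checked tower-by-tower once one splits the (finite, free) $\ff[U]$-chain complex into elementary two-step subcomplexes on which the differential is a single power of $U$ (compare the computation of $H_*(X_{i_1}^*)$ above, and \cite[Proposition~4.1]{HHL}). Then Lemma~\ref{lem:5.1} applied to the dual appends a downwards tower $\mathcal{T}_{-g_{k-1}}(i_k)$ and produces a $J$-invariant cell in grading $-g_{k-1} - 2i_k + 1$. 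Dualizing once more reverts the old towers to their $\Sigma_{k-1}$ form, turns the new tower into the upwards tower $\mathcal{T}_{g_{k-1} + 2i_k - 1}(i_k)$ with head in grading $g_{k-1} + 1$, and sets $g_k = g_{k-1} + 2i_k - 1$; I would then check that $g_{k-1} + 1$ is the head grading prescribed by criterion~(1) (opposite previous sign) or criterion~(3) (same previous sign), and observe that $g_k$ is the new tail, confirming (b).

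Finally, the statement permits cancelling pairs $X_i - X_i$; since the argument nowhere uses maximal simplification, the same induction applies verbatim, the only observation needed being that $2i_k \leq \width(\Sigma_{k-1})$ still holds (with equality exactly when two consecutive indices coincide), so Lemma~\ref{lem:5.1} still applies at every stage. I expect the main obstacle to be organizational rather than conceptual: correctly propagating the reversal of tower orientations and the various $+1$ and $-1$ grading shifts through the dualize--double--dualize composition in the negative step, and reconciling them with the off-by-one conventions built into criteria~(1)--(3) and into the convention that the first tower's head sits in grading $0$ or $1$ according to its sign.
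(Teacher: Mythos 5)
Your proof is correct and follows essentially the same route as the paper: the same induction on $k$, the same two-part inductive hypothesis tracking both the torsion homology and the Maslov grading $g_k$ of the $J$-invariant cell, the same appeal to Lemma~\ref{lem:5.1} for the positive step and the same computation $g_k = g_{k-1} - 2i_k + 1$. The only difference is cosmetic: where you unwind the negative step explicitly through the dualize--double--dualize composition and track gradings by hand, the paper instead observes that (a) and (b) are symmetric under dualization (reflection of torsion towers over Maslov grading $1/2$) and applies the positive-step argument to $-\Sigma_{k+1} = -\Sigma_k + X_{i_{k+1}}$; both amount to the same bookkeeping.
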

\begin{proof}
We proceed by induction. Let $\Sigma_k$ denote the representative complex for the first $k$ terms in the above linear combination. We simultaneously show that (a) $H_*(\Sigma_k)$ is given by the desired construction, and that (b):
\begin{enumerate}
\item If the sign of $X_{i_k}$ is positive, then the $J$-invariant cell of $\Sigma_k$ has Maslov grading $g_k$ one lower than the tail of the $i_k$-tower.
\item If the sign of $X_{i_k}$ is negative, then the $J$-invariant cell of $\Sigma_k$ has Maslov grading $g_k$ equal to the tail of the $i_k$-tower.
\end{enumerate} 
The statements are easily established in the case of only one summand $\pm X_{i_1}$. Note that dualizing reflects torsion homology towers over the horizontal line of Maslov grading 1/2, so it easily follows that (a) and (b) hold for $\Sigma_k$ if and only if they hold for $- \Sigma_k$. Proceeding inductively, we assume that (a) and (b) hold for $\Sigma_k$ (and also $-\Sigma_k$). Consider $\Sigma_{k+1}$. If the sign of $X_{i_{k+1}}$ is positive, then (a) follows from Lemma~\ref{lem:5.1}, together with (a) and (b) for $\Sigma_k$. We easily compute $g_{k+1}  = g_k - 2i_{k+1} + 1$, since the dimensional grading of the $J$-invariant cell increases by one while the value of $\gr$ decreases by $2i_{k+1}$. This establishes (b). If the sign of $X_{i_{k+1}}$ is negative, then we apply the preceding argument to establish (a) and (b) for  $-\Sigma_{k+1} = -\Sigma_k + X_{i_{k+1}}$. This completes the proof.
\end{proof}
Lemma~\ref{lem:5.2} does not quite give the connected homology, since (for example) any cancelling pair of basis elements $X_i - X_i$ introduces two towers in the homology of $\Sigma$, while in local equivalence $X_i - X_i$ is zero. The content of Theorem~\ref{thm:1.1} is that this is the only further simplification in the homology which can happen. The author would like to thank Jennifer Hom for helpful comments regarding the following proof:

\begin{proof}[Theorem~\ref{thm:1.1}]
We proceed by induction on $n$. For $X = c_1 X_{i_1}$, the claim is a result of Example~\ref{examp:3.3}. Now suppose that we have a maximally simplified linear combination
\[
X = c_1X_{i_1} + c_2X_{i_2} + \cdots + c_nX_{i_n}
\]
with $i_1 > i_2 > \cdots > i_n$, and assume that we have established the claim for
\[
A = c_1 X_{i_1} + c_2 X_{i_2} + \cdots + c_{n-1}X_{i_{n-1}}.
\]
Write $B = c_n X_{i_n}$, so that $X = A \otimes B$. 

We begin by showing that $\Hc(X)$ must have at least $|c_k|$ towers of length $i_k$, for each $1 \leq k \leq n - 1$. To see this, first consider the local equivalence
\[
A = X \otimes (-B).
\]
This shows that $\Hc(A)$ is a summand of $H_*(X \otimes (-B))$, where (without loss of generality) we may replace $X$ with a locally equivalent complex so that $\Hc(X)$ is given by the torsion part of $H_*(X)$. Now, by the inductive hypothesis, $\Hc(A)$ has exactly $|c_k|$ towers of length $i_k$, for each $1 \leq k \leq n - 1$. The torsion homology of $H_*(-B)$ consists of exactly $|c_n|$ towers, each of length $i_n$. Since $i_n$ is not equal to any other $i_k$, it follows from an application of the K\"unneth theorem and an examination of the Tor functor that the only way for $\Hc(A)$ to be a summand of $H_*(X \otimes (-B))$ is for $\Hc(X)$ to contain at least $|c_k|$ towers of length $i_k$, for each $1 \leq k \leq n - 1$.

We now show that $\Hc(X)$ must have at least $|c_n|$ towers of length $i_n$. This follows similarly by observing that we have the local equivalence
\[
B = (-A) \otimes X. 
\]
Indeed, this implies that $\Hc(B)$ is a summand of $H_*((-A) \otimes X)$, where we may replace each factor with a locally equivalent complex whose connected homology is given by its usual homology. Now, $\Hc(B)$ consists of exactly $|c_n|$ towers, each having length $i_n$. By the inductive hypothesis, $\Hc(A)$ does not contain any towers of length $i_n$. It follows from the K\"unneth theorem and an examination of the Tor functor that $\Hc(X)$ must contain at least $|c_n|$ towers of length $i_n$, as desired.

By Lemma~\ref{lem:5.2}, we have a representative complex $\Sigma$ for $X$ whose torsion homology is already given by the construction claimed in the statement of Theorem~\ref{thm:1.1}. Note that $H_*(\Sigma)$ has exactly $|c_k|$ towers of length $i_k$, for each $1 \leq k \leq n$. The previous paragraphs then imply that the connected homology of $X$ must be all of this homology, as desired. The grading shift in the statement of the theorem follows from taking into account the shift by $-2$ in the map $h$, together with the shift by $-d$ in Theorem~\ref{thm:2.5} and the shift by $-1$ in the definition of connected Floer homology.
\end{proof}


\section{Examples and Applications}\label{sec:6}
In this section, we prove Theorem~\ref{thm:1.2}, as well as Corollaries~\ref{cor:1.3} and \ref{cor:1.4}. We begin with a proof of Theorem~\ref{thm:1.2}:
\begin{proof}[Theorem~\ref{thm:1.2}]
Let $l$ and $k$ be positive integers, and let $d$ be any integer. Consider the $\ff[U]$-module
\[
\bigoplus_{i = 0}^{k-1} \mathcal{T}_{d - i(2l - 1)}(l),
\]
formed by concatenating $k$ different $U$-torsion towers of length $l$, where the towers are placed so that the uppermost element of each tower has grading one less than the lowermost element of the previous tower. We refer to this as a \textit{chain of towers} (\textit{of length} $l$). As in Section~\ref{sec:1.1}, we may identify chains of towers as downwards pointing or upwards pointing. In the former case, we define the head of a chain to be the element with maximal grading and the tail to be the element with minimal grading; in the latter, we reverse these conditions. It is clear from the algorithm of Theorem~\ref{thm:1.1} that if we group together all the $U$-torsion towers in $\HF_\mathrm{conn}$ of a fixed length, they must form a chain.

Now sort these chains in descending order according to their tower lengths $l$. According to the algorithm of Theorem~\ref{thm:1.1}, the first chain in this list will either point downwards and have its head in grading  $d(Y) - 1$, or point upwards and have its head in grading $d(Y)$. The orientation of each subsequent chain can then be inductively determined by its position relative to the tail of the previous chain. The theorem easily follows.
\end{proof}

We now re-phrase the computations of \cite{DS} in terms of the connected Floer homology:
\begin{proof}[Corollary~\ref{cor:1.3}]
According to \cite[Theorem 1.2]{DS} as written, we have
\[
h(Y, \s) = (\pm Y_{i_1} \pm Y_{i_2} \pm Y_{i_3} \pm \cdots \pm Y_{i_n})[2\bar{\mu}(Y, \s)],
\]
where $Y_i = X_i[-2i]$ (see the discussion following \cite[Lemma 4.3]{DS}). Our previous decomposition 
\[
h(Y, \s) = (\pm X_{i_1} \pm X_{i_2} \pm X_{i_3} \pm \cdots \pm X_{i_n})[-d(Y, \s)]
\]
thus shows that 
\[
2\bar{\mu}(Y, \s) - 2(\pm i_1 \pm i_2 \pm \cdots \pm i_n) = -d(Y, \s),
\]
where the signs in front of the $i_k$ are the same as the signs appearing in the decomposition of $h(Y, \s)$. Each index $i_k$ contributes a tower of length $i_k$ to $\HF_{\mathrm{conn}}(Y, \s)$, which is downwards pointing if the sign in front of $X_{i_k}$ is positive and upwards pointing of the sign in front of $X_{i_k}$ is negative. The corollary easily follows.
\end{proof}

\begin{proof}[Corollary~\ref{cor:1.4}]
The Rokhlin invariant reduces to the Neumann-Siebenmann invariant modulo 2.
\end{proof}

We now give an illustration of Theorem~\ref{thm:1.1} in which we also give the actual complexes constructed via the doubling and halving operations of Section~\ref{sec:3}.
\begin{examp}
Consider a maximally simplified linear combination of the form
\[
X = X_{i_1} + X_{i_2} - X_{i_3} - X_{i_4} + X_{i_5}.
\]
The representative complex of Lemma~\ref{lem:5.2} may be constructed via a sequence of iterated doubling and halving operations, as displayed on the left in Figure~\ref{fig:14}. We begin with the geometric complex of $X_{i_1}$, double to obtain $X_{i_1} + X_{i_2}$, half to obtain $X_{i_1} + X_{i_2} - X_{i_3}$, and so on. The values of $\gr$ are similarly defined inductively; the final result is displayed in the upper-right in Figure~\ref{fig:14}.

One can verify by hand that the homology of this complex has the form prescribed by Theorem~\ref{thm:1.1}, as follows. The first torsion tower is generated by the sum of the two zero-graded 0-cells, while the second tower is generated by the sum of the two semicircular 1-cells. The other generators are more subtle: the third tower is generated by a closed loop running around either the upper or the lower hemisphere, consisting of the sum of (the appropriate $U$-powers of) four 1-cells. The fourth tower is generated by either of the interior 0-cells, plus the appropriate $U$-power of either of the zero-graded 0-cells. Finally, the fifth tower is generated by the sum of the two interior 0-cells. In general, it is rather tricky to identify the generators of the torsion towers directly in the representative complex, which is why we have instead used Lemma~\ref{lem:5.1} for the proof of Theorem~\ref{thm:1.1}.

\begin{figure}[h!]
\center
\includegraphics[scale=0.9]{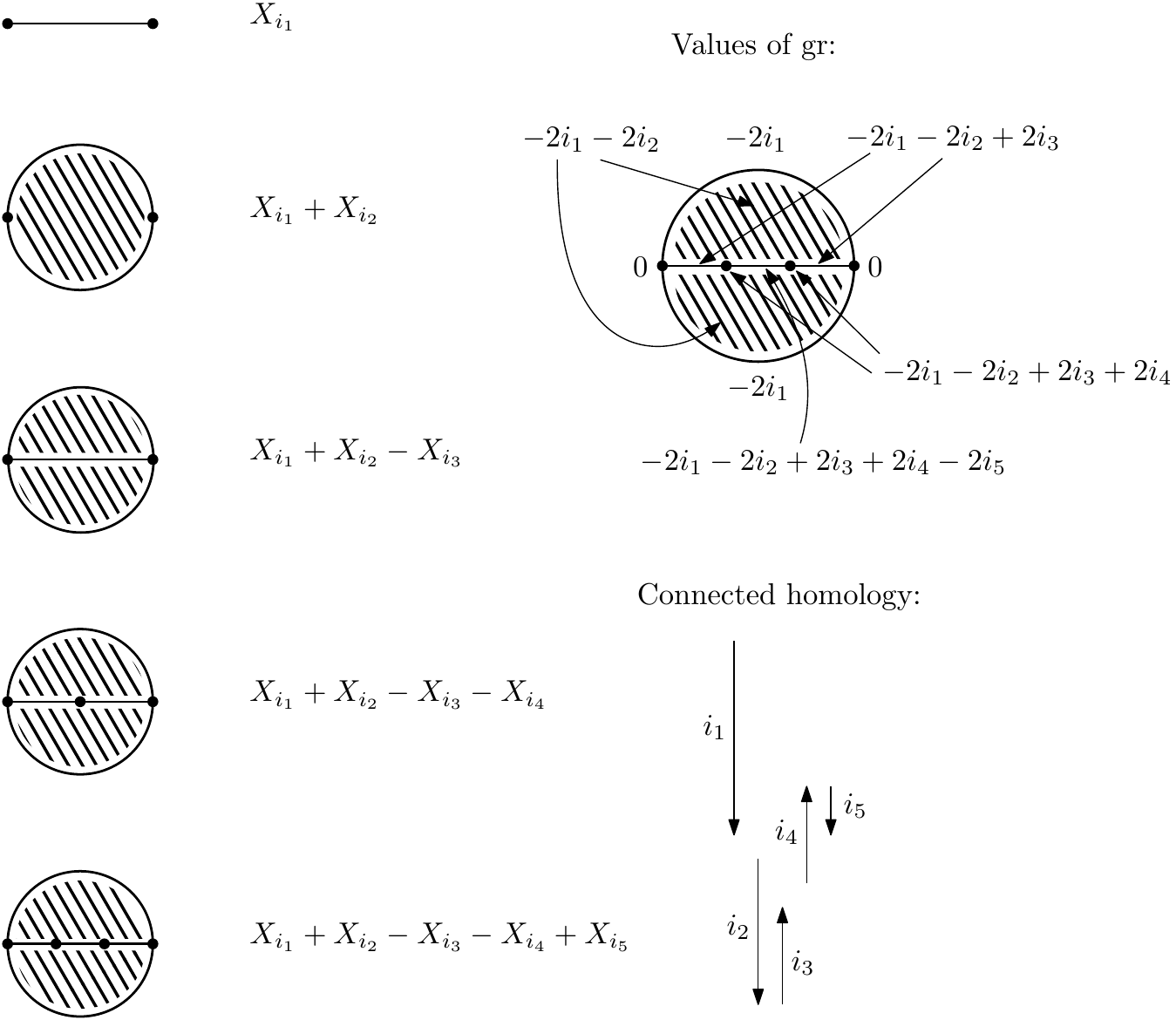}
\caption{Doubling and halving operations (left); final values of $\gr$ (upper-right); homology (lower-right).} \label{fig:14}
\end{figure}
\end{examp}

Finally, we close with an example of an $\inv$-complex whose connected homology is not of the form prescribed by Theorem~\ref{thm:1.1}. Again, we stress that it is not known whether this is realized as the $\inv$-complex of any actual 3-manifold; such an example would show that Seifert fibered spaces do not generate the homology cobordism group.

\begin{examp}\label{ex:6.2}
Consider the geometric complex whose skeleton is given by the usual $J$-equivariant cellular decomposition of $D^2$, as in Figure~\ref{fig:15}. While this is the same skeleton as for $X_{i_1} + X_{i_2}$, we put a ``misordered" grading function on $D^2$, as follows. Fix integers $0 < x < y$. Let the 0-cells have grading zero, the 1-cells have grading $-2x$, and the 2-cell have grading $-2(x + y)$. Explicitly, this means that our $\inv$-complex is generated over $\ff[U]$ by the set $\{e_0, Je_0, e_1, Je_1, e_2 = Je_2\}$, with
\begin{align*}
&\partial e_1 = \partial Je_1 = U^x(e_0 + Je_0) \text{ and} \\
&\partial e_2 = U^y(e_1 + Je_1),
\end{align*}
where $x < y$. The resulting connected homology is displayed (modulo the grading shift by one) on the right in Figure~\ref{fig:15}. This has two $U$-torsion towers of lengths $x$ and $y$, but  the tower of shorter length occurs in higher grading, in contrast to the connected homology of $X_{i_1} + X_{i_2}$. It is easily checked that the connected homology of Figure~\ref{fig:15} (keeping in mind its placement in relation to the $d$-invariant) cannot be achieved through the procedure of Theorem~\ref{thm:1.1}.

\begin{figure}[h!]
\center
\includegraphics[scale=0.9]{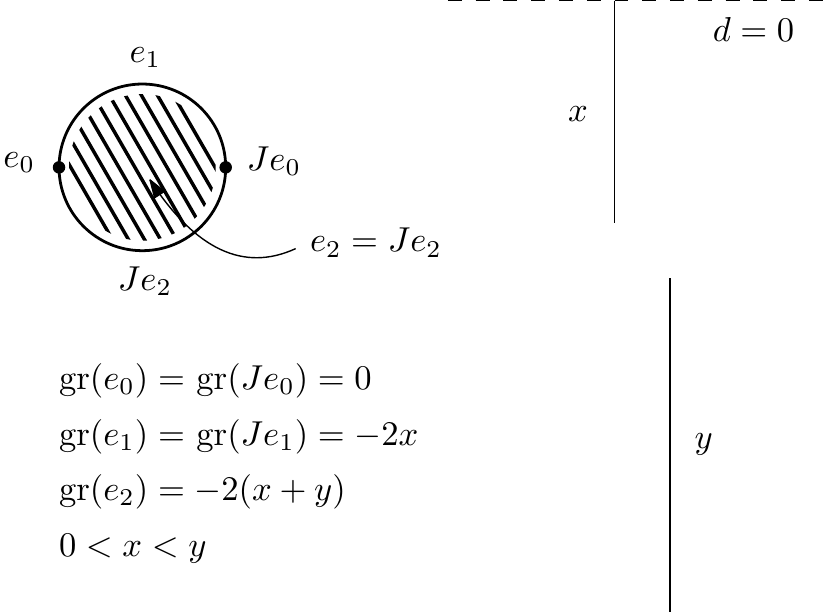}
\caption{A connected homology not arising via the procedure of Theorem~\ref{thm:1.1}.} \label{fig:15}
\end{figure}
\end{examp}

\pagebreak


\begin{thebibliography}{10}
\providecommand{\url}[1]{\texttt{#1}}
\providecommand{\urlprefix}{URL }
\expandafter\ifx\csname urlstyle\endcsname\relax
  \providecommand{\doi}[1]{doi:\discretionary{}{}{}#1}\else
  \providecommand{\doi}{doi:\discretionary{}{}{}\begingroup
  \urlstyle{rm}\Url}\fi
\providecommand{\eprint}[2][]{\url{#2}}

\bibitem{Dai}
I.~Dai, \emph{On the Pin(2)-equivariant monopole {F}loer homology of plumbed
  3-manifolds}, Michigan Math. J. \textbf{67} (2018), no.~2, 423--447.

\bibitem{DM}
I.~Dai and C.~Manolescu, \emph{Involutive {H}eegaard {F}loer homology and plumbed three-manifolds}, e-print, \url{arxiv:1704.02020}.

\bibitem{DS}
I.~Dai and M.~Stoffregen, \emph{On homology cobordism and local equivalence between plumbed manifolds}, e-print, \url{arxiv:1710.08055}.

\bibitem{FSinstanton}
R.~Fintushel and R.~J. Stern, \emph{Instanton homology of {S}eifert fibred
  homology three spheres}, Proc. London Math. Soc. (3), \textbf{61}(1990),
  no.~1, 109--137.
  
\bibitem{Froy}
K.~A. Fr$\o$yshov, \emph{Equivariant aspects of {Y}ang-{M}ills Floer theory}, Topology, \textbf{41}(2002), 525--552.

\bibitem{FurutaHom}
M.~Furuta, \emph{Homology cobordism group of homology {$3$}-spheres}, Invent.
  Math., \textbf{100}(1990), no.~2, 339--355.
 
\bibitem{HHL}
K.~Hendricks, J.~Hom, and T.~Lidman, \emph{Applications of involutive {H}eegaard {F}loer homology}, e-print, \eprint{arXiv:1802.02008}.

\bibitem{HMinvolutive}
K.~Hendricks and C.~Manolescu, \emph{Involutive {H}eegaard {F}loer homology},
  Duke Math. J., \textbf{166}(2017), no.~7, 1211--1299.

\bibitem{HMZ}
K.~Hendricks, C.~Manolescu, and I.~Zemke, \emph{A connected sum formula for
  involutive {H}eegaard {F}loer homology}, e-print, \eprint{arXiv:1607.07499}.
  
\bibitem{Lin}
F.~Lin, \emph{A Morse-{B}ott approach to monopole {F}loer homology and the
  {T}riangulation conjecture}, e-print, \eprint{arXiv:1404.4561}.

\bibitem{Lin2}
F.~Lin, \emph{The surgery exact triangle in Pin(2)-monopole {F}loer homology}, e-print, \eprint{arXiv:1504.01993}.
  
\bibitem{Lin3}
F.~Lin, \emph{Pin(2)-monopole {F}loer homology, higher compositions and connected sums}, e-print, \eprint{arXiv:1605.03137}.
  
\bibitem{Triangulations}
C.~Manolescu, \emph{Pin(2)-equivariant {S}eiberg-{W}itten {F}loer homology and
  the triangulation conjecture}, J. Amer. Math. Soc., \textbf{29}(2016), no.~1,
  147--176.

\bibitem{Nem}
A.~N\'emethi, \emph{Lattice cohomology of normal surface singularities}, Publ.
  Res. Inst. Math. Sci., \textbf{44}(2008), no.~2, 507--543.

\bibitem{Neu}
W.~D. Neumann, \emph{An invariant of plumbed homology spheres}, in
  \emph{Topology {S}ymposium, {S}iegen 1979 ({P}roc. {S}ympos., {U}niv.
  {S}iegen, {S}iegen, 1979)}, Springer, Berlin, volume 788 of \emph{Lecture
  Notes in Math.}, pp. 125--144, 1980.  

\bibitem{Plumbed}
P.~S. Ozsv{\'a}th and Z.~Szab{\'o}, \emph{On the {F}loer homology of plumbed
  three-manifolds}, Geometry and Topology, \textbf{7}(2003), 185--224.

\bibitem{Sieb}
L.~Siebenmann, \emph{On vanishing of the {R}ohlin invariant and nonfinitely
  amphicheiral homology {$3$}-spheres}, in \emph{Topology {S}ymposium, {S}iegen
  1979 ({P}roc. {S}ympos., {U}niv. {S}iegen, {S}iegen, 1979)}, Springer,
  Berlin, volume 788 of \emph{Lecture Notes in Math.}, pp. 172--222, 1980.  
  
\bibitem{Stoffregen}
M.~Stoffregen, \emph{Pin(2)-equivariant {S}eiberg-\!{W}itten {F}loer homology
  of {S}eifert fibrations}, e-print, \url{arXiv:1505.03234}.

\bibitem{Stoffregen2}
M.~Stoffregen, \emph{Manolescu invariants of connected sums}, Proc. Lond. Math. Soc.
  (3) \textbf{115} (2017), no.~5, 1072--1117.
\end{thebibliography}
\end{document}